\def\x{\vec{x}}
\begin{document}
%%%%% title : short title may not be used but TITLE is required.
% \title{TITLE}
% \title[short title]{TITLE}
\title{A Variational Discretization Method for Mean\\ 
	Curvature Flows by the Onsager Principle}

%%%%% author(s) :
% single author:
% \author[name in running head]{AUTHOR\corrauth}
% [name in running head] is NOT OPTIONAL, it is a MUST.
% Use \corrauth to indicate the corresponding author.
% Use \email to provide email address of author.
% \footnote and \thanks are not used in the heading section.
% Another acknowlegments/support of grants, state in Acknowledgments section
% \section*{Acknowledgments}
\author[O.~Author]{Only Author\corrauth}
\address{School of Mathematical Sciences, Beijing Normal University,
Beijing 100875, P.R. China}
%\email{{\tt author@email} (O.~Author)}

% multiple authors:
% Note the use of \affil and \affilnum to link names and addresses.
% The author for correspondence is marked by \corrauth.
% use \emails to provide email addresses of authors
% e.g. below example has 3 authors, first author is also the corresponding
%      author, author 1 and 3 having the same address.
% \author[Z. Zhang et~al.]{Zhengru Zhang\affil{1}\comma\corrauth,
%       Author Chan\affil{2}~and Author Zhao\affil{1}}
% \address{\affilnum{1}\ School of Mathematical Sciences,
%          Beijing Normal University,
%          Beijing 100875, P.R. China. \\
%           \affilnum{2}\ Department of Mathematics,
%           Hong Kong Baptist University, Hong Kong SAR.}
% \emails{{\tt zhang@email} (Z.~Zhang), {\tt chan@email} (A.~Chan),
%          {\tt zhao@email} (A.~Zhao)}
% \footnote and \thanks are not used in the heading section.
% Another acknowlegments/support of grants, state in Acknowledgments section
% \section*{Acknowledgments}
\author[F.~Author and A.~Co-Author(s)]{First Author\affil{1}\comma\corrauth\ and Co-Author(s)\affil{2}}
\address{\affilnum{1}\ address of First Author\\
\affilnum{2}\ address of Co-Author(s)}

% same address:
\author[Y. Liu and X. Xu,]{Yihe Liu and Xianmin Xu\corrauth}
\address{LSEC, ICMSEC, NCMIS, Academy of Mathematics and Systems Science, Chinese Academy of Sciences, Beijing 100190, China}

\emails{{\tt liuyihe@amss.ac.cn} (Y. Liu), {\tt xmxu@lsec.cc.ac.cn} (X. Xu)}

%%%%% Begin Abstract %%%%%%%%%%%
\begin{abstract}
The mean curvature flow describes the evolution of a surface (a curve) with normal velocity proportional to the local mean curvature. It has many applications in mathematics, science and engineering. In this paper, we develop a numerical method for mean curvature flows by using the Onsager principle as an approximation tool. We first show that the mean curvature flow can be derived naturally from the Onsager variational principle. Then we consider a piecewise linear approximation of the curve and derive a discrete geometric flow. The discrete flow is described by a system of ordinary differential equations for the nodes of the discrete curve. We prove that the discrete system preserve the energy dissipation structure in the framework of the Onsager principle and this implies the energy decreasing property. The ODE system can be solved by the improved Euler scheme and this leads to an efficient fully discrete scheme. We first consider the method for a simple mean curvature flow and then extend it to the volume preserving mean curvature flow and also a wetting problem on substrates. Numerical examples show that the method has optimal convergence rate and works well for all the three problems.
\end{abstract}
%%%%% end %%%%%%%%%%%
%%%%% AMS/PACs/Keywords %%%%%%%%%%%
%\pac{}
\ams{53E10, 82C35, 65M60%The information of the AMS subject classification can be found in http://mathscinet.ams.org/msc/msc2010.html
}
\keywords{Mean curvature flow, the Onsager principle, moving finite element method}

%%%% maketitle %%%%%
\maketitle

%%%% Start %%%%%%

\section{Introduction}
Mean curvature flow describes the process of surface evolution, where a surface moves in its normal direction with a velocity equal to its mean curvature, i.e.
\begin{equation}
	\vec{v}=-H\vec{n},
\end{equation}
where $\vec{v}$ denotes the velocity of motion of the surface, and $H$ denotes the mean curvature of the surface. $\vec{n}$ is the unit normal vector of the surface. The mean curvature flow first appeared in materials science in the 1920s, and the general mathematical model was first proposed in 1957 in \cite{mullins1957theory}, where Mullins used it to describe how grooves develop on the surfaces of hot polycrystals. The mean curvature flow has found numerous applications. One is in image smoothing, where it is used to improve image quality by removing unnecessary noise and enhancing specific features without altering the transmitted information \cite{gallagher2020curve}. Additionally, the mean curvature flow has been employed to model a special form of the reaction-diffusion equation in chemical reactions \cite{rubinstein1989fast}. 
%The geometrician B.White wrote in his comprehensive review paper \cite{white2002evolution}:`` There are many processes by which a surface can evolve, but among them one is arguably the most natural: the mean curvature flow. '' 

The numerical approximation of mean curvature flow originates from the pioneering work of Dziuk in 1990 \cite{dziuk1990algorithm}. He proposed a parametric finite element method (PFEM) to solve the problem based on the observation that the mean curvature flow is a diffusion equation for the surface embedding in a bulk region. However, as the surface evolves, the nodes in the PFEMs may overlap, leading to mesh distortion \cite{hu2022evolving}. Subsequently, various techniques have been introduced in PFEMs that allow points on interpolated curves or surfaces to shift tangentially to produce a better distribution of nodes. For example, Deckelnick and Dziuk introduced an artificial tangential velocity in their work \cite{deckelnick1995approximation}. By similar motivation, Elliott and Fritz employed a DeTurck's trick to introduce a tangential velocity  \cite{m2017approximations}. A widely used method is the parametric finite element method developed by Barrett, Garcke, and $\text{N\"{u}rnberg}$ \cite{Barrett2007OnTV, Barrett2008OnTP, Barrett2008ParametricAO}. They utilize a different variational form which can induce tangential velocity for the mesh nodes automatically.  The method have been generalized some other geometrical flows \cite{Barrett2010NumericalAO, barrett2011approximation,barrett2012parametric,jiang2021perimeter,bao2021structure,li2021energy,bao2022volume,bao2023symmetrized,hu2022evolving}. Recently, there are  significant progresses in rigorous convergence analysis for discrete schemes for mean curvature flows and some other geometric flows \cite{kovacs2019convergent,kovacs2021convergent,li2020convergence,li2021convergence,Bai2023ANA}. In addition,  there exist many other methods to approximate the mean curvature flow numerically, such as the threshold dynamics method \cite{ merriman1994motion, ruuth2003simple, esedoḡ2015threshold, Xu2016AnET, Wang2019AnIT}, the level set method \cite{osher1988fronts, evans1991motion, chen1991uniqueness, sethian1999level, zhao1996variational, saye2020review}, and the phase field method \cite{Edwards1991InterfacialTP, evans1992phase, eggleston2001phase, feng2003numerical}, etc.

In this paper, we develop a new  finite element method for mean curvature flow by using the Onsager principle as an approximation tool. We consider three different problems, namely the standard mean curvature flow, the volume preserving mean curvature flow and a wetting problem with contact with substrate. We show that the dynamic equations of all the three problems can be derived naturally by the Onsager principle. Furthermore, by considering a piecewise linear approximation of the surface, we can derive semi-discrete problems which preserve the energy dissipation relations just as the continuous problems. The discrete volume can also be preserved if the continuous problem preserve the volume. The semi-discrete problems can be further discretized in time by the improved Euler scheme. Numerical examples show that the method works well for all the three problems. In particular, it has optimal convergence rate in space for a mean curvature flow with analytic solutions. We mainly consider  mean curvature flows of a curve in two dimensional space, also known as curve shortening flow in literature. Our method can be generalized to higher dimensional problems  while there will be more restrictions on the time discrete schemes to avoid mesh distortion. 

The structure of the paper is as follows. In Section 2, we introduce the main idea to use the Onsager variational principle as an approximation tool. In Section 3, we first derive a continuous partial differential equation for a simple smooth closed curve under mean curvature flow using the Onsager principle. Subsequently, we discretize the equation using piecewise linear curves and prove the discrete energy decays property. We introduce a penalty term to the energy functional to avoid mesh distortion. We also briefly discuss the extension of the method to higher dimensional cases. In Section 4, we further consider a mean curvature flow with the constraint of volume conservation. We derive a continuity equation and also a discrete problem. In Section 5, we further generalize the method to solve a wetting problem, which is a non-closed curve contacting with substrates. In Section 6, we present several numerical examples to demonstrate the efficiency of the methods. We give a few conclusion remarks in the final section.

\section{The Onsager principle as an approximation tool}

Consider a physical system which is described by a time-dependent function $u(t)$. For simplication in notation, we denote the time derivative of $u$ by $\dot{u}=\frac{\partial u}{\partial t}$. In a dissipative system, the evolution of $u$ over time may cause energy dissipation, which can be quantified by a dissipation function denoted by $\Phi(\dot{u})$. In many applications, the dissipation function can be expressed as    
\begin{equation*}
	\Phi(\dot{u})=\frac{\xi}{2}\left\Vert{\dot{u}}\right\Vert_2^2, 
\end{equation*}
where $\xi$ is a positive friction coefficient. Suppose the free energy of the system is given by a function $\mathcal{E}(u)$. Given $\dot{u}$, the changing rate of the total energy can be computed as
\begin{equation*}
	\begin{aligned}
		\dot{\mathcal{E}}(u;\dot{u})
		=\left \langle\frac{\delta \mathcal{E}(u)}{\delta u},\dot{u} \right \rangle
		=\left . \frac{\text{d} \mathcal{E}(u+\lambda\dot{u})}{\text{d}\lambda}\right \vert_{\lambda=0}.
	\end{aligned}
\end{equation*}
Then we can define a Rayleighian functional as follows
\begin{equation} \label{Rayleighian}
	\mathcal{R}(u;\dot{u})=\Phi(\dot{u})+\dot{\mathcal{E}}(u;\dot{u}).
\end{equation}

Based on the above aforementioned definitions, the Onsager variational principle can be stated as follows \cite{Onsager1931,Onsager1931a,DoiSoft}. For any dissipative system without inertial effect, the evolution of the system can be obtained by minimizing the Rayleighian with respect to $\dot{u}$. In other words, the evolution equation of $u$ is obtained by
\begin{equation}
	\min_{\dot{u}\in V}\mathcal{R}(u,\dot{u}),\label{minR}
\end{equation}
where $V$ represents the admissible space of $\dot{u}$. Since the Rayleighian is a quadratic form with respect to $\dot{u}$, the minimization problem \eqref{minR} can be described by the Euler-Lagrange equation
\begin{equation}
	( \xi\dot{u},\varphi)=-\left \langle\frac{\delta\mathcal{E}(u)}{\delta u},\varphi \right \rangle,\quad \forall\varphi \in V, \label{E-L equation}
\end{equation}
or in a simple form
\begin{equation}
	\xi\dot{u}=-\frac{\delta\mathcal{E}(u)}{\delta u}.\label{E-L simple form}
\end{equation}
In physics, Eq. \eqref{E-L simple form} implies a balance between the frictional force $-\xi\dot{u}$ and the driven force $\frac{\delta\mathcal{E}(u)}{\delta u}$. For a more comprehensive introduction of the Onsager principle, we refer to \cite{DoiSoft}.

Recently, the Onsager principle is used to derive an approximate model for a physical system \cite{Doi2015}. The key idea is follows. Suppose that the system can be described by 
a system of slow variables $\mathbf{a}=(a_1,a_2,\cdots, a_N)$. Suppose we can compute the approximate energy $\mathcal{E}_h(\mathbf{a})$ and the approximate dissipation function $\Phi_h(\dot{\mathbf{a}})$. Then we can derive a discrete ODE system by the Onsager principle, which is given by 
$$
\frac{\delta \Phi_h}{\delta \dot{\mathbf a}}+\frac{\delta \mathcal{E}_h}{\delta \mathbf{a}}=0.
$$
The method has been used to develop reduced models in complicated two-phase flows \cite{XuXianmin2016,DiYana2016}, some soft matter 
problems \cite{ManXingkun2016,ZhouJiajia2017},  solid dewetting \cite{Jiang19b},  and also in dislocation dynamics \cite{dai2021boundary}. It can also be used to derive numerical method for wetting problems \cite{lu2021efficient}. In this following, we will investigate the application of the Onsager principle in deriving numerical schemes for mean curvature flows.

\section{Mean curvature flow} \label{problem 1}
\subsection{Derivation of the continuous equation by the Onsager principle} \label{general CSF model}
We mainly consider the mean curvature flow in two dimensional space. For a simple smooth closed curve $\Gamma(t)$ in the plane, the points on it are denoted by $\x(t)$. We aim to derive the mean curvature flow equation of $\Gamma(t)$ by using the Onsager variational principle.

First, suppose the dimensionless surface energy in a physical system is given by 
\begin{equation}
	\mathcal{E}=\int_{\Gamma(t)}\text{d}s=|\Gamma(t)|,
\end{equation}
where $s$ is the arc length parameter. In order to calculate the changing rate of the surface energy with respect to time, we need to employ the following transport equation for a closed curve \cite{barrett2020parametric}
%\begin{equation}\label{curve RTE}
%	\begin{aligned}
%		\frac{\text{d}}{\text{d}t}\int_{\Gamma(t)} f \text{d}s = \int_{\Gamma(t)}[\frac{\partial f}{\partial t}+\nabla_{\Gamma}\cdot(f\vec{v})]\text{d}s = \int_{\Gamma(t)} (\frac{\text{d}f}{\text{d}t} + f\kappa v_n) \text{d}s.
%	\end{aligned}
%\end{equation}
\begin{equation}\label{curve RTE}
		\frac{\text{d}}{\text{d}t}\int_{\Gamma(t)} f \text{d}s = \int_{\Gamma(t)} \left (\frac{\partial f}{\partial t} + v_n\vec{n}\cdot\nabla f + f\kappa v_n\right ) \text{d}s.
\end{equation}
Where $\vec{v}=\frac{\text{d}\x}{\text{d}t}$, $v_n=\vec{v}\cdot\vec{n}$ is the normal velocity, and $\kappa$ denotes the curvature of $\Gamma(t)$. Let $f=1$ in \eqref{curve RTE}, we obtain the changing rate of the total energy with respect to time
\begin{equation} \label{dE/dt}
	\begin{aligned}
		\dot{\mathcal{E}}=\frac{\text{d}}{\text{d}t}\int_{\Gamma(t)}\text{d}s=\int_{\Gamma(t)}\kappa v_{n}\text{d}s. 
	\end{aligned}
\end{equation}
Suppose that the dissipation function in the system is given by
\begin{equation*}   
	\Phi=\frac{1}{2}\int_{\Gamma(t)} |\vec{v}|^2 \text{d}s.  
\end{equation*}
Then the corresponding Rayleighian is defined as
\begin{equation*}
	\mathcal{R}=\Phi+\dot{\mathcal{E}}=\int_{\Gamma(t)}\left (\frac{1}{2}|\vec{v}|^2+\kappa v_{n}\right )\text{d}s.
\end{equation*}
Using the Onsager principle, we minimize the Rayleighian $\mathcal{R}$ with respect to $\vec{v}$, i.e.
\begin{equation*}
	\min_{\vec{v}}\mathcal{R}. \label{minR v_n}
\end{equation*}
The corresponding Euler-Lagrange equation is
\begin{equation}
	\vec{v}=-\kappa\vec{n}. \label{mean curvature E-L eqaution}
\end{equation}
This is the mean curvature flow \cite{li2022parametric}. Substituting Eq. \eqref{mean curvature E-L eqaution} into \eqref{dE/dt}, we have 
\begin{equation*} 
	\dot{\mathcal{E}}
	=\int_{\Gamma(t)}\kappa v_{n}\text{d}s
	=-\int_{\Gamma(t)}\kappa^2\text{d}s=-2\Phi
	\leq 0,
\end{equation*}
and the equality holds if and only if $\kappa=0$. This implies that the total energy decreases with respect to time for mean curvature flow.

\subsection{A discrete mean curvature flow by the Onsager principle} \label{general CSF discrete}
In this section, we aim to numerically solve the dynamic equation \eqref{mean curvature E-L eqaution}. Suppose that the curve $\Gamma(t)$ can be approximated by piecewise linear segments, as illustrated in Fig. \ref{fig:Interpolation curve}. Denote the nodes by
\begin{equation*}
	\x_1(t),\x_2(t),...,\x_n(t),
\end{equation*}
where $\x_i(t)=(x_i^{(1)}(t),x_i^{(2)}(t))^\top\ (1\leq i \leq n)$. The line segments connecting adjacent nodes are denoted by $\Gamma_i(t)=\overline{\x_i(t)\x_{i+1}(t)}\ (1\leq i \leq n-1)$ and $\Gamma_n(t)=\overline{\x_n(t)\x_{1}(t)}$. Therefore, the approximate curve is given by $\Gamma_h(t)=\bigcup\limits_{i=1}^n\Gamma_i(t)$.
\begin{figure}[H]
	\centering
	\includegraphics[width=0.5\textwidth]{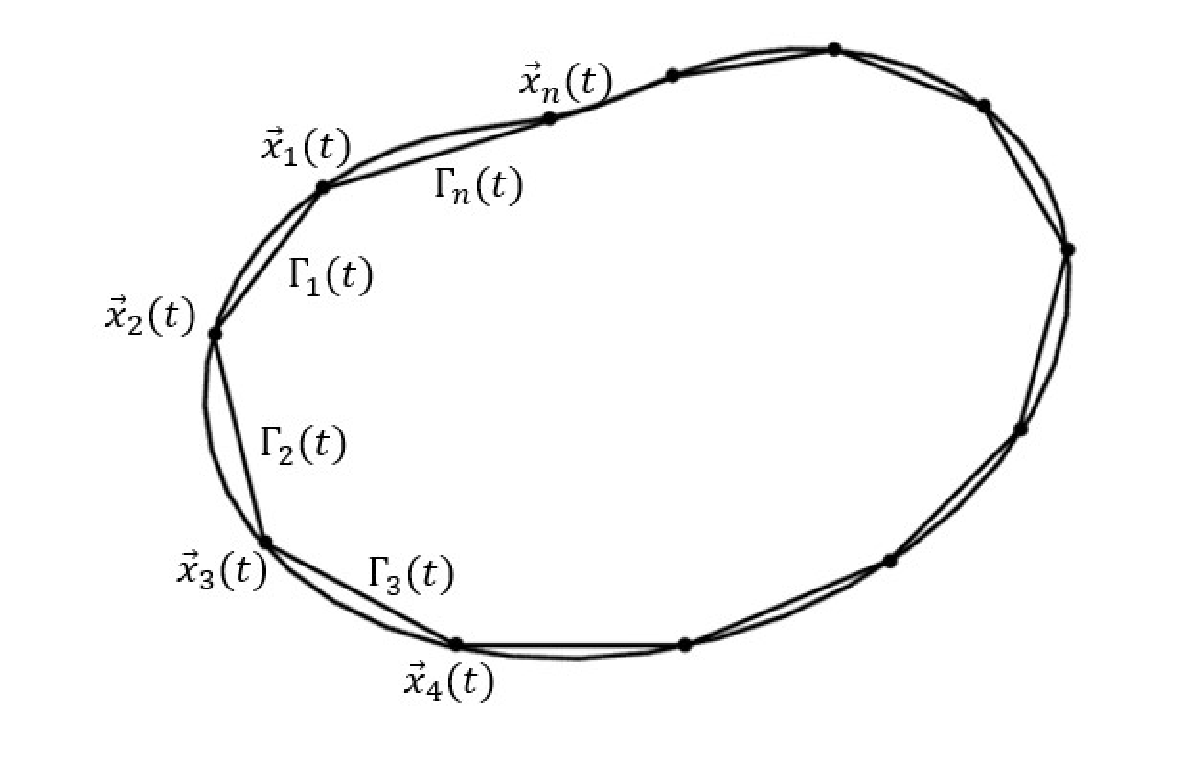}
	\caption{Selection of interpolation points.}
	\label{fig:Interpolation curve}
\end{figure}

We will derive the dynamic equation for $\Gamma_h(t)$ by using the Onsager principle. For a line segment $\Gamma_i(t)\ (1\leq i\leq n)$, the points on the segment can be expressed as 
\begin{equation*}
	\begin{aligned}
		\x_h(t)&=\mu[\x_{i+1}(t)-\x_i(t)]+\x_i(t)
		=(1-\mu)\x_i(t)+\mu\x_{i+1}(t), 
	\end{aligned}  
\end{equation*}
where $0\leq\mu=\frac{s}{|\x_{i+1}(t)-\x_{i}(t)|}\leq1$ is a parameter and $s$ is the arc length parameter of $\Gamma_i(t)$. $\x_{n+1}$ is set to be $\x_{1}$. The velocity of each point $\x_h(t)$ is given by
\begin{equation*}
	\vec{v}_h(t)=(1-\mu)\dot{\x}_i(t)+\mu\dot{\x}_{i+1}(t).
\end{equation*}
The unit tangent vector of $\Gamma_i(t)$ is calculated by
\begin{equation*}
	\vec{\tau}_i=\frac{\x_{i+1}(t)-\x_i(t)}{|\x_{i+1}(t)-\x_i(t)|}.
\end{equation*}
The outward unit normal vector is given by
\begin{equation*}
	\vec{n}_i=\frac{P[\x_{i+1}(t)-\x_i(t)]}{|\x_{i+1}(t)-\x_i(t)|},
\end{equation*}
where the rotation matrix $P=\bigl(\begin{smallmatrix} 0&1\\-1&0 \end{smallmatrix}\bigr)$.

For the discrete curve $\Gamma_h(t)$, the discrete energy is calculated by
\begin{equation*}   
	\mathcal{E}_h=\int_{\Gamma_h(t)}\text{d}s=\sum_{i=1}^n\int_{\Gamma_i(t)}\text{d}s=\sum_{i=1}^n|\Gamma_i(t)|=\sum_{i=1}^n|\x_{i+1}(t)-\x_i(t)|.  
\end{equation*}
The discrete dissipation function is calculated by
\begin{equation*}
	\begin{aligned}
		\Phi_h=\frac{1}{2}\int_{\Gamma_h(t)}|\vec{v}_h|^2\text{d}s.
	\end{aligned}
\end{equation*}
By the Onsager principle, we minimize the discrete Rayleighian with respect to $\dot{\x}_i$, i.e.
\begin{equation}
	\min_{\dot{\x}_i}\mathcal{R}_h:=\Phi_h+\dot{\mathcal{E}}_h.
\end{equation}
The corresponding Euler-Lagrange equation is
\begin{equation}
	\frac{\partial \Phi_h}{\partial \dot{\x}_i}+\frac{\partial \mathcal{E}_h}{\partial \x_i}=0.\label{discrete E-L}
\end{equation}
This leads to a system of ordinary differential equations with respect to the variables $\dot{\x}_i\ (1\leq i \leq n)$.

We can specify the explicit formula for \eqref{discrete E-L} by direct calculations. Denote by $\dot{X}=(\dot{\x}_{1}^\top,\dot{\x}_{2}^\top,...,\dot{\x}_{n}^\top)^\top$ and notice that $\Phi_h$ is a quadratic function with respect to $\dot{\x}_i\ (1\leq i\leq n)$. The equation set \eqref{discrete E-L} can be rewritten as 
\begin{equation}
	A\dot{X}=G,\label{AV=G}
\end{equation}
where the coefficient matrix $A$ is symmetric positive definite and the right-hand term $G=(\vec{g}_1^\top,\vec{g}_2^\top,...,\vec{g}_n^\top)^\top$,
such that $\Phi_h=\frac{1}{2}  \dot{X}^\top A\dot{X}$ and $\vec{g}_i=-\frac{\partial   \mathcal{E}_h}{\partial \vec{x}_i} $. The explicit formulas for $A$ and $G$ are given in Appendix.

We can easily prove the discrete energy dissipative property of the ODE system \eqref{AV=G}.
\begin{theorem} \label{energy decreasing}
	Suppose $\dot{X}=(\dot{\x}_{1}^\top,\dot{\x}_{2}^\top,...,\dot{\x}_{n}^\top)^\top$ is the solution of the system \eqref{AV=G}, then we have
	\begin{equation*}
		\frac{\text{d}\mathcal{E}_h}{\text{d}t}=-2\Phi_h\leq 0,
	\end{equation*}
	where the equality holds if and only if $\dot{\x}_i=0\ (0\leq i \leq n)$.
\end{theorem}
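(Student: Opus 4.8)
The plan is to exploit the two structural facts recorded just before the statement: that $\mathcal{E}_h$ depends on $t$ only through the node positions $\x_i(t)$, and that the ODE system \eqref{AV=G} is exactly the Euler--Lagrange equation \eqref{discrete E-L} of the quadratic Rayleighian. First I would differentiate the discrete energy along a trajectory by the chain rule,
\begin{equation*}
	\frac{\text{d}\mathcal{E}_h}{\text{d}t}=\sum_{i=1}^n\frac{\partial \mathcal{E}_h}{\partial \x_i}\cdot\dot{\x}_i
	=-\sum_{i=1}^n\vec{g}_i\cdot\dot{\x}_i
	=-G^\top\dot{X},
\end{equation*}
using the definition $\vec{g}_i=-\partial\mathcal{E}_h/\partial\x_i$ and the block notation for $G$ and $\dot X$.

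Next I would substitute the relation $G=A\dot{X}$ from \eqref{AV=G} and invoke the symmetry of $A$ to rewrite the right-hand side as a quadratic form:
\begin{equation*}
	\frac{\text{d}\mathcal{E}_h}{\text{d}t}=-G^\top\dot{X}=-(A\dot{X})^\top\dot{X}=-\dot{X}^\top A\dot{X}=-2\Phi_h,
\end{equation*}
where the last equality is just the stated identity $\Phi_h=\tfrac12\dot{X}^\top A\dot{X}$. This already gives the energy--dissipation balance $\dot{\mathcal{E}}_h=-2\Phi_h$, which is the discrete counterpart of the continuous relation $\dot{\mathcal{E}}=-2\Phi$ established in Section~\ref{general CSF model}.

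Finally, to get the sign and the equality case, I would use that $A$ is symmetric positive definite (this is asserted in the text, and in a complete write-up one would justify it from the explicit formula in the Appendix, noting it is the Gram-type matrix of the piecewise linear velocity field with respect to the $L^2(\Gamma_h)$ inner product, hence positive definite whenever the segments are nondegenerate). Positive definiteness gives $\Phi_h=\tfrac12\dot{X}^\top A\dot{X}\ge 0$, so $\dot{\mathcal{E}}_h\le 0$, with equality if and only if $\dot{X}=0$, i.e. $\dot{\x}_i=0$ for all $i$. There is essentially no analytic obstacle here; the only point requiring care — and the one I would emphasize — is the positive definiteness of $A$, since it is what both turns the identity into an inequality and pins down the degenerate case, and it is also where any hypothesis preventing mesh degeneration would have to enter.
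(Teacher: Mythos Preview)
Your proposal is correct and follows essentially the same line as the paper's own proof: chain rule on $\mathcal{E}_h$, substitution of $G=A\dot X$ from \eqref{AV=G}, the identity $\Phi_h=\tfrac12\dot X^\top A\dot X$, and positive definiteness of $A$ for the inequality and equality case. The only additional content you supply is a remark on why $A$ is positive definite, which the paper simply asserts.
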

\begin{proof}
	Direct calculations give
	\begin{equation*}
		\frac{\text{d} \mathcal{E}_h}{\text{d} t}=\sum_{i=1}^n\frac{\partial \mathcal{E}_h}{\partial \x_i}\cdot\dot{\x}_i=-\sum_{i=1}^n\vec{g}_i\cdot\dot{\x}_i=-\dot{X}^\top G=-\dot{X}^\top A \dot{X}=-2\Phi_h.
	\end{equation*}
	Here we have used Eq. \eqref{AV=G}. Since the coefficient matrix $A$ is positive definite, we prove the theorem. 
\end{proof}

\subsection{Stabilization for uniform distribution of vertexes} \label{general penalty}
In real simulations, it is necessary to introduce a penalty term to the energy function $\mathcal{E}_h$ in order to avoid vertexes accumulations on the discrete curve. For that purpose, we add a penalty term to the discrete energy function $\mathcal{E}_h$ as follows, 
\begin{equation}
	\begin{aligned}
		\mathcal{E}_h^{\delta}
		&=|\Gamma_h(t)|+\delta\sum_{i=1}^n\left (\frac{|\Gamma_i(t)|}{|\Gamma_{i+1}(t)|}-1\right)^2\\
		&=\sum_{i=1}^n\left [|\x_{i+1}(t)-\x_{i}(t)|+\delta\left (\frac{|\x_{i+1}(t)-\x_{i}(t)|}{|\x_{i+2}(t)-\x_{i+1}(t)|}-1\right )^2\right ].
	\end{aligned}
\end{equation}
In general may take $\delta=\frac{1}{n}$ in order to reduce the impact of the penalty term on $|\Gamma_h(t)|$. By the Onsager principle, we can obtain $\dot{\x}_i$ by minimizing the corresponding Rayleighian, i.e.
\begin{equation}
	\min_{\dot{\x}_i}\mathcal{R}_h^{\delta}:=\Phi_h+\dot{\mathcal{E}}_h^{\delta}.
\end{equation}
The corresponding Euler-Lagrange equation is 
\begin{equation*} 
	\frac{\partial \Phi_h}{\partial \dot{\x}_i}+\frac{\partial \mathcal{E}_h^{\delta}}{\partial \x_i}=0.
\end{equation*}
Similarly, the above equation can be rewritten as 
\begin{equation}
	A\dot{X}=G^{\delta}, \label{AV=G penalty term}
\end{equation}
where $G^{\delta}=((\vec{g}_1^{\delta})^\top,(\vec{g}_2^{\delta})^\top,...,(\vec{g}_n^{\delta})^\top)^\top$ 
with $\vec{g}_i^\delta =-\frac{\partial\mathcal{E}_h^\delta}{\partial \vec{x}_i}$. We refer to the Appendix for the explicit forms of $g_i^{\delta}$.

\subsection{Mean curvature flow in high dimensional spaces}
In the end of the section, we briefly discuss the mean curvature flow in higher dimensional spaces. For a n-dimensional hypersurface $\Gamma(t)$ in $\mathbb{R}^{n+1}$, we can also derive the mean curvature flow equation and its discretization  by  the Onsager variational principle.

First, suppose the dimensionless surface energy in a physical system is given by 
\begin{equation*}
	\mathcal{E}=\int_{\Gamma(t)}\text{d}s=|\Gamma(t)|.
\end{equation*}
Similarly to the curve case, we compute the changing rate of the total energy  by using the Reynolds transport equation as 
\begin{equation} \label{dE/dt 3D}
	\begin{aligned}
		\dot{\mathcal{E}}=\frac{\text{d}}{\text{d}t}\int_{\Gamma(t)}\text{d}s=\int_{\Gamma(t)}H v_{n}\text{d}s, 
	\end{aligned}
\end{equation}
where $v_n=\vec{v}\cdot\vec{n}$ is the normal velocity and   $H = \sum_{j=1}^{n}\kappa_j$  is the mean curvature of $\Gamma(t)$  with $\kappa_1, ..., \kappa_n$ being the principal curvatures \footnote{Notice that the definition  of the mean curvature differs from the common one $H = \frac{1}{n}\sum_{j=1}^{n}\kappa_j$ \cite{deckelnick2005computation}.} .
Suppose  the dissipation function in the system is given by
\begin{equation*}   
	\Phi=\frac{1}{2}\int_{\Gamma(t)} |\vec{v}|^2 \text{d}s.  
\end{equation*}
Then the corresponding Rayleighian is defined as
\begin{equation*}
	\mathcal{R}=\Phi+\dot{\mathcal{E}}=\int_{\Gamma(t)}\left (\frac{1}{2}|\vec{v}|^2+H v_{n}\right )\text{d}s.
\end{equation*}
By the Onsager principle, we minimize the Rayleighian $\mathcal{R}$ with respect to $\vec{v}$, i.e.
\begin{equation*}
	\min_{\vec{v}}\mathcal{R}. \label{minR v_n 3D}
\end{equation*}
The corresponding Euler-Lagrange equation is
\begin{equation}
	\vec{v}=-H\vec{n}. \label{mean curvature E-L eqaution 3D}
\end{equation}
This gives a mean curvature flow equation in high dimensional space.

To discretize Eq. \eqref{mean curvature E-L eqaution 3D}, we can follow the same approach as the  curve case. We first approximate the surface $\Gamma(t)$ by a discrete surface $\Gamma_h(t):= \bigcup_{T\in\mathcal{T}_h}T$ consists of piecewise nondegenerate n-dimensional simplices $T$. Then we derive  a discrete system  for the vertexes of the triangulation by using the Onsager principle similarly as that in the previous subsections. In principle, we could solve the discrete system  to approximate the mean curvature flow for the hypersurface. However, there may be more restrictions on the time discrete schemes to avoid mesh distortion in high dimensional cases. Some properly designed semi-implicit schemes may have nice property as the parametric finite  element method \cite{Barrett2007OnTV}. This will be left for future work.

\section{Volume preserving mean curvature flow} \label{problem 2}
\subsection{Derivation of the dynamic equation by the Onsager principle}
In many applications, the surface energy is minimized while the volume enclosed by the surface is preserved. This can be described by mean curvature flow with volume preservation. In the following, we will derive the geometric equation for such a problem by using the Onsager principle.

We suppose that the area enclosed by a closed curve $\Gamma(t)$ is $S$. The changing rate of the area with respect to time is calculated by
\begin{equation*}
	\frac{\text{d}S}{\text{d}t}=\int_{\Gamma(t)}\vec{v}\cdot\vec{n}\text{d}s=\int_{\Gamma(t)}v_{n}\text{d}s.
\end{equation*}
We assume that both the energy and the dissipative function are the same as in the section \ref{general CSF model}. The Rayleighian $\mathcal{R}$ is defined in \eqref{Rayleighian}. 
When the area $S$ is preserved, the Onsager principle can be stated as follows. We minimize the Rayleighian $\mathcal{R}$ with respect to $\vec{v}$ under the constraint $\frac{\text{d}S}{\text{d}t}=0$, i.e.
\begin{equation}
	\begin{aligned}
		&\min_{\vec{v}}\ \mathcal{R},\\  \label{2minR v_n}
		&s.t.\ \frac{\text{d}S}{\text{d}t}=0.
	\end{aligned}
\end{equation}
Introduce a Lagrangian multiplier $\lambda$. An augmented Lagrangian is defined as 
\begin{equation*}
	\begin{aligned}
		\mathcal{L}(\vec{v}, \lambda) 
		= \mathcal{R} + \lambda\frac{\text{d}S}{\text{d}t}= \int_{\Gamma(t)}\left (\frac{1}{2}|\vec{v}|^2+\kappa v_{n}+\lambda v_{n}\right )\text{d}s.
	\end{aligned}
\end{equation*}
We let
\begin{equation*}
	\left \{ \begin{array}{rcl}
		\frac{\delta \mathcal{L}}{\delta \vec{v}} = 0, \vspace{1ex} \\ 
		\frac{\partial\mathcal{L}}{\partial \lambda} = 0.
	\end{array} \right .
\end{equation*}
By direct calculations, the corresponding Euler-Lagrange equation is given by
\begin{subnumcases} {\label{2E-L}}
	\vec{v} = -(\kappa + \lambda)\vec{n}, \label{2E-L-1}\\
	\int_{\Gamma(t)} v_{n} \text{d}s = 0. \label{2E-L-2}
\end{subnumcases}
The equation can be simplified as follows. We substitute Eq. \eqref{2E-L-1} into \eqref{2E-L-2}. This leads to
\begin{equation*}
	\int_{\Gamma(t)}-(\kappa + \lambda) \text{d}s = 0,
\end{equation*}
which implies that
\begin{equation*}
	\lambda|\Gamma(t)| = -\int_{\Gamma(t)} \kappa \text{d}s.
\end{equation*}
For a simple smooth closed curve $\Gamma(t)$ in the plane, the Gauss-Bonnet formula implies that
\begin{equation}\label{e:Gauss-Bonnet}
	\int_{\Gamma(t)} \kappa \text{d}s = 2\pi.
\end{equation}
This leads to
\begin{equation*}
	\lambda = -\frac{2\pi}{|\Gamma(t)|}.
\end{equation*}
Therefore, we obtain the equation of volume preserving mean curvature flow as follows: 
\begin{equation} \label{2v_n}
	\vec{v} = -\left (\kappa - \frac{2\pi}{|\Gamma(t)|}\right )\vec{n}.
\end{equation}

By substituting Eq. \eqref{2v_n} into \eqref{dE/dt}, we have
\begin{equation*}
		\dot{\mathcal{E}} 
		= \int_{\Gamma(t)}\kappa v_{n}\text{d}s
		= -\int_{\Gamma(t)}\kappa\left (\kappa-\frac{2\pi}{|\Gamma(t)|}\right )\text{d}s
		=-\int_{\Gamma(t)}\left (\kappa-\frac{2\pi}{|\Gamma(t)|}\right )^2\text{d}s
		=-2\Phi\leq 0.	
\end{equation*}
Here we have used Eq. \eqref{e:Gauss-Bonnet} so that
\begin{equation*}
	\int_{\Gamma(t)}\frac{2\pi}{|\Gamma(t)|}\left (\kappa-\frac{2\pi}{|\Gamma(t)|}\right )\text{d}s=\frac{2\pi}{|\Gamma(t)|} \left ( \int_{\Gamma(t)}\kappa\text{d}s
	- \frac{2\pi}{|\Gamma(t)|}\int_{\Gamma(t)}\text{d}s \right ) =0.
\end{equation*}
We can easily see that the energy decreases with respect to time under the volume preserving mean curvature flow.

\subsection{Discretization of the dynamic equation by the Onsager principle}

In this subsection, we will numerically solve the dynamic equation \eqref{2E-L} of $\Gamma(t)$ for the volume preserving mean curvature flow. The discretization of the curve is the same as that in Section \ref{general CSF discrete} (Fig. \ref{fig:Interpolation curve}).

The area enclosed by the approximate curve $\Gamma_h(t)$ is denoted as $S_h$. Then the changing rate of $S_h$ with respect to time is calculated as
\begin{equation*}
	\begin{aligned}
		\frac{\text{d}S_h}{\text{d}t}
		=\int_{\Gamma_h(t)} v_{h, n} \text{d}s
		= \sum_{i=1}^{n}\int_{\Gamma_i(t)} \vec{v}_h\cdot\vec{n}_i\text{d}s.
	\end{aligned}
\end{equation*}
By using the Onsager principle, we minimize the discrete Rayleighian with respect to $\dot{\x}_i$ under the area conservation constraint, i.e.
\begin{equation}
	\begin{aligned}
		&\min_{\dot{\x}_i}\ \mathcal{R}_h:=\Phi_h+\dot{\mathcal{E}}_h,\\
		&s.t.\ \frac{\text{d}S_h}{\text{d}t}=0.
	\end{aligned}
\end{equation}
The corresponding discrete augmented Lagrangian is defined as
\begin{equation*}
	\mathcal{L}_h(\dot{\x}_i, \lambda) = \mathcal{R}_h + \lambda\frac{\text{d}S_h}{\text{d}t}.
\end{equation*}
We let
\begin{equation*}
	\left \{ \begin{array}{rcl}
		\frac{\partial\mathcal{L}_h}{\partial \dot{\x}_i} = 0, \vspace{1ex} \\
		\frac{\partial\mathcal{L}_h}{\partial \lambda} = 0.
	\end{array} \right .
\end{equation*}
The corresponding Euler-Lagrange equation is given by
\begin{equation} \label{2discrete E-L}
	\left \{ \begin{array}{ll}
		\frac{\partial\Phi_h}{\partial\dot{\x}_i} + \lambda\frac{\partial(S_h)_t}{\partial\dot{\x}_i} = -\frac{\partial\mathcal{E}_h}{\partial\x_i}, \vspace{1ex} \\
		\frac{\text{d}S_h}{\text{d}t} = 0. 
	\end{array} \right .
\end{equation}
This is a system of differential-algebraic equations with respect to $\dot{\x}_i\ (1\leq i \leq n)$ and $\lambda$.

Denote by $\dot{X}=(\dot{\x}_{1}^\top,\dot{\x}_{2}^\top,...,\dot{\x}_{n}^\top,\lambda)^\top$ and notice that $\Phi_h$ is a quadratic function with respect to $\dot{\x}_i\ (1\leq i\leq n)$. The system \eqref{2discrete E-L} can be rewritten as
\begin{equation}
	\hat{A}\dot{X}=\hat{G},\label{2AV=G}
\end{equation}
where the coefficient matrix $\hat{A}$ is a symmetric matrix and $\hat{G}=(\vec{g}_{1}^\top,\vec{g}_{2}^\top,...,\vec{g}_{n}^\top,g_{n+1})^\top$. The explicit formulas for $\hat{A}$ and $\hat{G}$ are given in Appendix.

In the following, we will prove the discrete energy decreasing property of the system \eqref{2AV=G}.
\begin{theorem} \label{Th2}
	Suppose $\dot{X}=(\dot{\x}_{1}^\top,\dot{\x}_{2}^\top,...,\dot{\x}_{n}^\top,\lambda)^\top$ is the solution of Eq. \eqref{2AV=G}, then we have
	\begin{equation*}
		\frac{\text{d}\mathcal{E}_h}{\text{d}t}=-2\Phi_h \leq 0,
	\end{equation*}
	where the equality holds if and only if $\dot{\x}_i=0\ (0\leq i \leq n)$.
\end{theorem}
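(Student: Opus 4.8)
The plan is to reproduce the energy identity from the proof of Theorem~\ref{energy decreasing}, keeping track of the extra Lagrange-multiplier contribution coming from the area constraint. First I would differentiate the discrete energy along a solution of \eqref{2AV=G},
\begin{equation*}
	\frac{\text{d}\mathcal{E}_h}{\text{d}t}=\sum_{i=1}^n\frac{\partial\mathcal{E}_h}{\partial\x_i}\cdot\dot{\x}_i,
\end{equation*}
and then use the first block of the Euler--Lagrange system \eqref{2discrete E-L}, i.e. $\frac{\partial\mathcal{E}_h}{\partial\x_i}=-\frac{\partial\Phi_h}{\partial\dot{\x}_i}-\lambda\frac{\partial(S_h)_t}{\partial\dot{\x}_i}$, to rewrite this as
\begin{equation*}
	\frac{\text{d}\mathcal{E}_h}{\text{d}t}=-\sum_{i=1}^n\frac{\partial\Phi_h}{\partial\dot{\x}_i}\cdot\dot{\x}_i-\lambda\sum_{i=1}^n\frac{\partial(S_h)_t}{\partial\dot{\x}_i}\cdot\dot{\x}_i.
\end{equation*}

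The next step is to evaluate the two sums by homogeneity in the node velocities. Since $\Phi_h=\frac12\int_{\Gamma_h(t)}|\vec{v}_h|^2\,\text{d}s$ is a homogeneous quadratic form in $(\dot{\x}_1,\dots,\dot{\x}_n)$, Euler's identity gives $\sum_{i=1}^n\frac{\partial\Phi_h}{\partial\dot{\x}_i}\cdot\dot{\x}_i=2\Phi_h$. On the other hand $\frac{\text{d}S_h}{\text{d}t}=(S_h)_t=\sum_{i=1}^n\int_{\Gamma_i(t)}\vec{v}_h\cdot\vec{n}_i\,\text{d}s$ is exactly linear (homogeneous of degree one) in the $\dot{\x}_i$ — this is clear from the shoelace formula for the polygon area, which is quadratic in the vertices and hence has a time derivative bilinear in vertices and velocities — so $\sum_{i=1}^n\frac{\partial(S_h)_t}{\partial\dot{\x}_i}\cdot\dot{\x}_i=(S_h)_t$. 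The second equation of \eqref{2discrete E-L} forces $(S_h)_t=\frac{\text{d}S_h}{\text{d}t}=0$, so the multiplier term vanishes and we are left with $\frac{\text{d}\mathcal{E}_h}{\text{d}t}=-2\Phi_h$.

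Finally I would address the sign and the equality case. Since $\Phi_h=\frac12\int_{\Gamma_h(t)}|\vec{v}_h|^2\,\text{d}s\geq0$, we get $\frac{\text{d}\mathcal{E}_h}{\text{d}t}\leq0$; and $\Phi_h=0$ iff $\vec{v}_h\equiv 0$ on $\Gamma_h(t)$, which, because $\vec{v}_h$ restricted to each nondegenerate segment $\Gamma_i(t)$ is the linear interpolant of $\dot{\x}_i$ and $\dot{\x}_{i+1}$, is equivalent to $\dot{\x}_i=0$ for all $i$. I do not expect a real obstacle. The only point requiring a little care is that, in contrast with Theorem~\ref{energy decreasing}, one cannot simply invoke positive definiteness of the full saddle-point matrix $\hat{A}$ — it is indefinite because of the multiplier block — so the conclusion has to be extracted from the quadratic structure of $\Phi_h$ together with the constraint, exactly as above, rather than from $\hat A$ itself.
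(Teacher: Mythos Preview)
Your proposal is correct and follows essentially the same route as the paper: differentiate $\mathcal{E}_h$ along the flow, substitute the first block of \eqref{2discrete E-L}, use that $(S_h)_t$ is linear in the velocities together with the constraint $(S_h)_t=0$ to kill the multiplier term, and identify the remaining sum as $2\Phi_h$. The paper phrases the last two steps via the submatrix $A_0=\hat A(1,\dots,2n\,|\,1,\dots,2n)$, writing $\sum_i\frac{\partial\Phi_h}{\partial\dot{\x}_i}\cdot\dot{\x}_i=\dot X_0^\top A_0\dot X_0=2\Phi_h$ and invoking positive definiteness of $A_0$, which is exactly your Euler-homogeneity and integral-positivity argument in matrix notation; your explicit treatment of the equality case via the piecewise-linear interpolant is a bit more detailed than the paper's one-line appeal to $A_0$ being positive definite, but the content is the same.
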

\begin{proof}
	First introduce some notations, $A_0:=\hat{A}(1,... ,2n|1,... ,2n), \dot{X}_0:=\dot{X}(1,...,2n)$. Direct calculations give
	\begin{equation*}
		\begin{aligned}
			\frac{\text{d}\mathcal{E}_h}{\text{d} t}
			&=\sum_{i=1}^n\frac{\partial\mathcal{E}_h}{\partial\x_i}\cdot\dot{\x}_i 
			= -\sum_{i=1}^n\left [\frac{\partial\Phi_h}{\partial\dot{\x}_i} + \lambda\frac{\partial(S_h)_t}{\partial\dot{\x}_i}\right ]\cdot\dot{\x}_i\\ 
			&= -\sum_{i=1}^n\frac{\partial\Phi_h}{\partial\dot{\x}_i}\cdot\dot{\x}_i - \lambda\sum_{i=1}^n\frac{\partial(S_h)_t}{\partial\dot{\x}_i}\cdot\dot{\x}_i 
			= -\dot{X}_0^\top A_0 \dot{X}_0 - \lambda\frac{\text{d}S_h}{\text{d}t}\\
			&= -\dot{X}_0^\top A_0 \dot{X}_0=-2\Phi_h.
		\end{aligned}
	\end{equation*}
	Here we have used Eq. \eqref{2discrete E-L}. It is also easy to know that the matrix $A_0$ is symmetric positive definite, so the theorem is proved.
\end{proof}

In  simulations, we also use the modified energy function $\mathcal{E}_h^{\delta}$ to avoid the mesh degeneracy, as  in section \ref{general penalty}, and the corresponding Euler-Lagrange equation is 
\begin{equation}
	\hat{A}\dot{X}=\hat{G}^{\delta}. \label{2AV=G penalty term}
\end{equation}

\section{Wetting problems} \label{problem 3}
In this section, we further apply the method to wetting problems which can be formulated as a volume preserving mean curvature flow for curves in contact with the substrates.

\subsection{Derivation of the dynamic equation by the Onsager principle}
We consider a two dimensional wetting problem, as illustrated in Fig. \ref{fig:LVS_system}, where the total energy of a liquid-vapor-solid system is composed of three components (c.f. \cite{Xu2016AnET}), i.e.
\begin{equation} \label{LVS energy}
	\tilde{\mathcal{E}}=\gamma_{LV}|\Gamma_{LV}|+\gamma_{SL}|\Gamma_{SL}|+\gamma_{SV}|\Gamma_{SV}|,
\end{equation}
where $\gamma_{LV}$, $\gamma_{SL}$ and $\gamma_{SV}$ represent the energy densities of the liquid-vapor interface $\Gamma_{LV}$, the solid-liquid interface $\Gamma_{SL}$ and the solid-vapor interface $\Gamma_{SV}$ respectively. Assuming that the solid boundary $\Gamma_S=\Gamma_{SL}\bigcup\Gamma_{SV}$ is homogeneous, then both $\gamma_{SL}$ and $\gamma_{SV}$ are constants. In equilibrium the angle between the liquid-vapor interface and the solid interface is determined by Young's equation \cite{YoungIIIAE}: 
\begin{equation*}
	\gamma_{LV}\cos\theta_Y=\gamma_{SV}-\gamma_{SL}.
\end{equation*}
By using the Young's equation, \eqref{LVS energy} can be further simplified to
\begin{equation} \label{calculate energy}
	\begin{aligned}
		\tilde{\mathcal{E}}
		&=\gamma_{LV}|\Gamma_{LV}|-\frac{\gamma_{LV}\cos\theta_Y}{2}|\Gamma_{SL}|+\frac{\gamma_{LV}\cos\theta_Y}{2}(|\Gamma_{S}|-|\Gamma_{SL}|)+\frac{\gamma_{SL}+\gamma_{SV}}{2}|\Gamma_{S}|\\
		&=\gamma_{LV}|\Gamma_{LV}|-\gamma_{LV}\cos\theta_Y|\Gamma_{SL}|+C,
	\end{aligned}
\end{equation}
where $C=\gamma_{SV}|\Gamma_S|$.
\begin{figure}[H]
	\centering
	\includegraphics[width=0.5\textwidth]{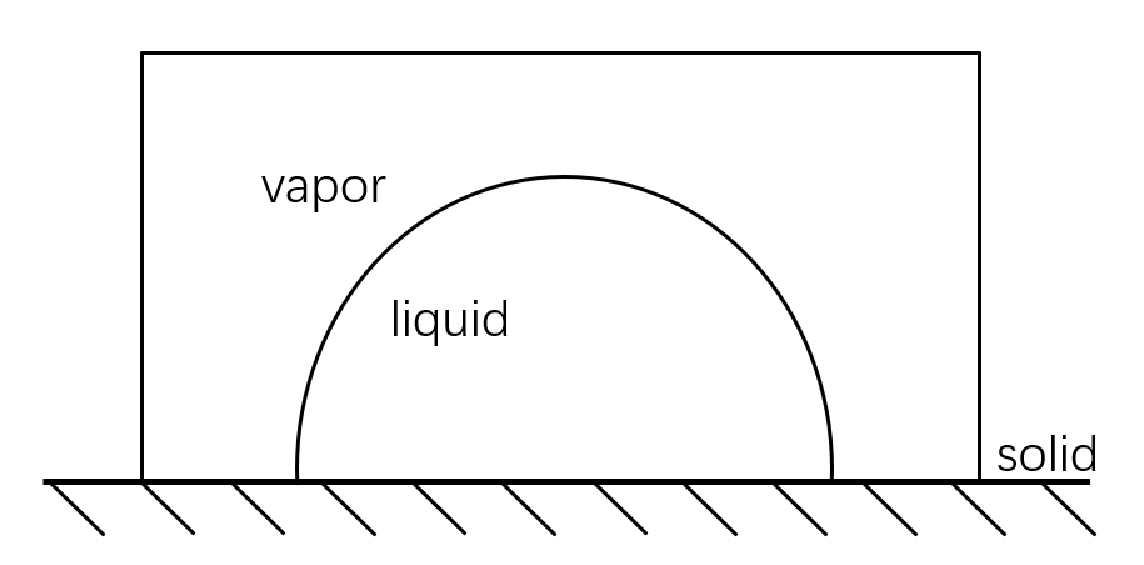}
	\caption{Schematic diagram of a liquid-vapor-solid system.}
	\label{fig:LVS_system}
\end{figure}

Consider a general non-closed curve $\tilde{\Gamma}(t)$, where the left and right endpoints lie on the same horizontal line, as shown in Fig. \ref{fig:Non_closed_curve}. Let $\x(s,t)$ denote the points on the curve $\tilde{\Gamma}(t)$, where $s$ is the arc length parameter. We define the counterclockwise direction as the positive direction of the curve $\tilde{\Gamma}(t)$, and denote its length as $L(t)$. The right endpoint is represented by $\x(0,t)$ and the left endpoint by $\x(L,t)$. The left and right contact angles between the curve $\tilde{\Gamma}(t)$ and the horizontal line are denoted by $\theta_L$ and $\theta_0$, respectively. The unit tangent vector is denoted by $\vec{\tau}$ and the outward unit normal vector by $\vec{n}$. Under the constraint that the area enclosed by $\tilde{\Gamma}(t)$ and the horizontal line remains constant, we aim to derive the evolution equation of the non-closed curve $\tilde{\Gamma}(t)$ under volume preserving mean curvature flow.
\begin{figure}[H]
	\centering
	\includegraphics[width=0.5\textwidth]{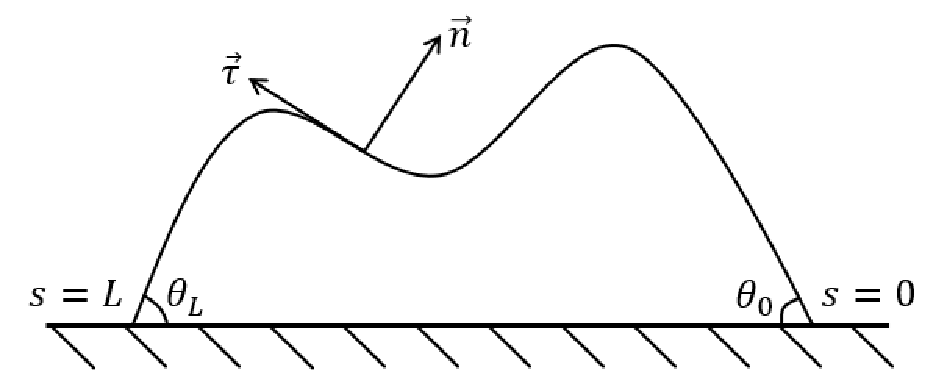}
	\caption{A non-closed curve whose two endpoints always lie on the same horizontal line.}
	\label{fig:Non_closed_curve}
\end{figure}

The total surface energy in \eqref{calculate energy} can be rewritten as
\begin{equation*}
	\tilde{\mathcal{E}}=\gamma[L(t)-| \x(0,t)-\x(L,t)| \cos\theta_Y]+C,
\end{equation*}
where $\gamma=\gamma_{LV}$ represents the surface energy density and $\theta_Y$ denotes the Young's angle. Since the left and right endpoints of $\tilde{\Gamma}(t)$ lie on the bottom line, they possess solely horizontal velocities. Taking the horizontal right direction as the positive direction, the velocities at the left and right endpoints are denoted by $v_L(t)$ and $v_0(t)$, respectively. The changing rate of the length $L(t)$ of $\tilde{\Gamma}(t)$ with respect to time is given by (c.f. Lemma 5.1 in \cite{lu2021efficient})
\begin{equation*}
	\frac{\text{d}L(t)}{\text{d}t}=\int_{\tilde{\Gamma}(t)}\kappa v_n\text{d}s+v_0\cos\theta_0-v_L\cos\theta_L.
\end{equation*}
Therefore, the changing rate of the total energy is calculated as
\begin{equation}
	\begin{aligned} \label{3dE/dt}
		\dot{\tilde{\mathcal{E}}}
		&=\gamma\left [\frac{\text{d}L(t)}{\text{d}t}-\frac{\text{d}}{\text{d}t} |\x(0,t)-\x(L,t)| \cos\theta_Y\right ]\\
		&=\gamma\left [\int_{\tilde{\Gamma}(t)}\kappa v_n\text{d}s+v_0\cos\theta_0-v_L\cos\theta_L-\cos\theta_Y(v_0-v_L)\right ]\\
		&=\gamma\left [\int_{\tilde{\Gamma}(t)}\kappa v_n\text{d}s+v_0(\cos\theta_0-\cos\theta_Y)-v_L(\cos\theta_L-\cos\theta_Y)\right ].
	\end{aligned}
\end{equation}
We suppose that the dissipation function of $\tilde{\Gamma}(t)$ is given by 
\begin{equation} \label{3Phi}
	\tilde{\Phi}=\frac{\xi_0}{2}\int_{\tilde{\Gamma}(t)}|\vec{v}|^2\text{d}s+\frac{\xi_1}{2}v_0^2+\frac{\xi_1}{2}v_L^2,
\end{equation}
where $\xi_0, \xi_1$ are positive friction coefficients. Notice we introduce additional terms $\frac{\xi_1}{2}v_0^2$ and $\frac{\xi_1}{2}v_L^2$ to the dissipation function $\tilde{\Phi}$, which are dissipations due to the contact line frictions. As a result, it characterize the relaxation process of the contact angles from their initial values $\theta_L$ and $\theta_0$ to the equilibrium Young's angle $\theta_Y$. Then the corresponding Rayleighian is given by
\begin{equation*}
	\tilde{\mathcal{R}}=\tilde{\Phi}+\dot{\tilde{\mathcal{E}}}.     
\end{equation*} 
In wetting problems, the volume of a liquid is preserved. The time derivative of the area $\tilde{S}$ enclosed by the curve $\tilde{\Gamma}(t)$ and the bottom line is given by (c.f. Lemma5.1 in \cite{lu2021efficient}) %\cite[Lemma 5.1]{lu2021efficient}
\begin{equation*}
	\frac{\text{d}\tilde{S}}{\text{d}t}=\int_{\tilde{\Gamma}(t)}v_n \text{d}s.
\end{equation*}

By using the Onsager principle, we minimize the Rayleighian $\tilde{\mathcal{R}}$ with respect to $\vec{v}$, $v_0$, $v_L$ under the area conservation condition, i.e.
\begin{equation*}
	\begin{aligned}
		&\min_{\vec{v},v_0,v_L}\ \tilde{\mathcal{R}},\\  \label{3minR v_n}
		&s.t.\ \frac{\text{d}\tilde{S}}{\text{d}t}=0.
	\end{aligned}
\end{equation*}
Introduce a Lagrangian multiplier $\lambda$, the corresponding augmented Lagrangian is defined as
\begin{equation*}
	\begin{aligned}
		\tilde{\mathcal{L}}(\vec{v},v_0,v_L,\lambda) 
		= \tilde{\mathcal{R}} + \lambda\frac{\text{d}\tilde{S}}{\text{d}t}
		&= \int_{\tilde{\Gamma}(t)}\left (\frac{\xi_0}{2}|\vec{v}|^2+\gamma\kappa v_{n}+\lambda v_{n}\right )\text{d}s+\frac{\xi_1}{2}v_0^2+\frac{\xi_1}{2}v_L^2\\
		&\quad +\gamma[v_0(\cos\theta_0-\cos\theta_Y)-v_L(\cos\theta_L-\cos\theta_Y)].
	\end{aligned}
\end{equation*}
We let
\begin{equation*}
	\left \{ \begin{array}{rcl}
		\frac{\delta\tilde{\mathcal{L}}}{\delta \vec{v}} = 0, \vspace{1ex} \\
		\frac{\partial\tilde{\mathcal{L}}}{\partial v_0} = 0, \vspace{1ex} \\
		\frac{\partial\tilde{\mathcal{L}}}{\partial v_L} = 0, \vspace{1ex} \\
		\frac{\partial\tilde{\mathcal{L}}}{\partial \lambda} = 0.
	\end{array} \right .
\end{equation*}
This leads to the following Euler-Lagrange equation
\begin{subnumcases} {\label{3E-L}}
	\xi_0\vec{v} = -(\gamma\kappa + \lambda)\vec{n}, \label{3E-L-1}\\
	\xi_1v_0=-\gamma(\cos\theta_0-\cos\theta_Y),\\
	\xi_1v_L=\gamma(\cos\theta_L-\cos\theta_Y),\\
	\int_{\tilde{\Gamma}(t)} v_n \text{d}s = 0. \label{3E-L-4}
\end{subnumcases}
Substitute Eq. \eqref{3E-L-1} into \eqref{3E-L-4}. We get
\begin{equation*}
	\int_{\tilde{\Gamma}(t)}-(\gamma\kappa + \lambda) \text{d}s = 0.
\end{equation*}
This implies that
\begin{equation} \label{3lambda}
	\lambda L(t) = -\gamma\int_{\tilde{\Gamma}(t)} \kappa \text{d}s.
\end{equation}
For the closed curve consisting of $\tilde{\Gamma}(t)$ and the bottom line, applying the Gauss-Bonnet formula yields
\begin{equation*}
	\int_{\tilde{\Gamma}(t)}\kappa\text{d}s + 0\cdot| \x(0,t)-\x(L,t)| +\pi-\theta_0+\pi-\theta_L= 2\pi.
\end{equation*}
This leads to
\begin{equation} \label{3Gauss-Bonnet}
	\int_{\tilde{\Gamma}(t)} \kappa \text{d}s = \theta_0+\theta_L.
\end{equation}
Substituting Eq. \eqref{3Gauss-Bonnet} into \eqref{3lambda}, we have
\begin{equation*}
	\lambda = -\frac{\gamma(\theta_0+\theta_L)}{L(t)}.
\end{equation*}
Therefore, the dynamic equation for a wetting problem can be described by 
\begin{equation} \label{wetting_eq}
	\left \{ \begin{array}{rcl}
		&\vec{v} = -\xi_0^{-1}\gamma\left (\kappa - \frac{\theta_0+\theta_L}{|\tilde{\Gamma}(t)|}\right )\vec{n}, \vspace{1ex} \\
		&v_0=-\xi_1^{-1}\gamma(\cos\theta_0-\cos\theta_Y), \vspace{1ex} \\
		&v_L=\xi_1^{-1}\gamma(\cos\theta_L-\cos\theta_Y),
	\end{array} \right .
\end{equation}
where $\vec{v}$ denotes the velocity of the curve, $v_0$ and $v_L$ are respectively the horizontal velocity of the two endpoints, $\theta_L$ and $\theta_0$ denote the left and right contact angles of the curve with the bottom line, and $|\tilde{\Gamma}(t)|$ denotes the length of the curve. Substituting Eq. \eqref{wetting_eq} into \eqref{3dE/dt}, we get
\begin{equation} \label{3dE/dt<0}
	\begin{aligned}
		\dot{\tilde{\mathcal{E}}} 
		% &= \gamma[\int_{\tilde{\Gamma}(t)}\kappa v_{n}\text{d}s+v_0(\cos\theta_0-\cos\theta_Y)-v_L(\cos\theta_L-\cos\theta_Y)]\\
		% &= \gamma[\int_{\tilde{\Gamma}(t)}-\xi_0^{-1}\gamma\kappa(\kappa - \frac{\theta_0+\theta_L}{L(t)})\text{d}s-\xi_1^{-1}\gamma(\cos\theta_0-\cos\theta_Y)^2-\xi_1^{-1}\gamma(\cos\theta_L-\cos\theta_Y)^2]\\
		&= -\gamma^2\left [\int_{\tilde{\Gamma}(t)}\xi_0^{-1}\kappa\left (\kappa - \frac{\theta_0+\theta_L}{L(t)}\right )\text{d}s+\xi_1^{-1}(\cos\theta_0-\cos\theta_Y)^2+\xi_1^{-1}(\cos\theta_L-\cos\theta_Y)^2\right ].
	\end{aligned}
\end{equation}
By \eqref{3Gauss-Bonnet}, we can easily derive 
\begin{equation*}
	\begin{aligned}
		&\int_{\tilde{\Gamma}(t)}\kappa\left (\kappa - \frac{\theta_0+\theta_L}{L(t)}\right )\text{d}s=
		\int_{\tilde{\Gamma}(t)}\left (\kappa - \frac{\theta_0+\theta_L}{L(t)}\right )^2\text{d}s.
	\end{aligned}
\end{equation*}
This directly leads to 
\begin{equation*}
	\dot{\tilde{\mathcal{E}}}=-2\tilde{\Phi} \leq 0.
\end{equation*}
This implies that the total energy decreases with respect to time for the system characterized by the dynamic equation \eqref{wetting_eq}.

\subsection{Discretization of the dynamic equation by the Onsager principle} \label{discrete wetting problems}
In this section, we aim to numerically solve the dynamic equation \eqref{3E-L}. Suppose that the curve $\tilde{\Gamma}(t)$ can be approximated by piecewise linear segments, as illustrated in Fig. \ref{fig:3Interpolation curve}.  Denote the nodes by
\begin{equation*}
	\x_1(t),\x_2(t),...,\x_n(t).
\end{equation*}
Where $\x_i(t)=(x_i^{(1)}(t),x_i^{(2)}(t))^\top\ (1\leq i \leq n)$, the right endpoint $\x_1(t)=(x_1^{(1)}(t),0)^\top$ and the left endpoint $\x_n(t)=(x_n^{(1)}(t),0)^\top$. The line segments connecting adjacent nodes are denoted by $\tilde{\Gamma}_i(t)=\overline{\x_i(t)\x_{i+1}(t)}\ (1\leq i \leq n-1)$. Consequently, the approximate curve is given by $\tilde{\Gamma}_h(t)=\bigcup\limits_{i=1}^{n-1}\tilde{\Gamma}_i(t)$.
\begin{figure}[H]
	\centering
	\includegraphics[width=0.5\textwidth]{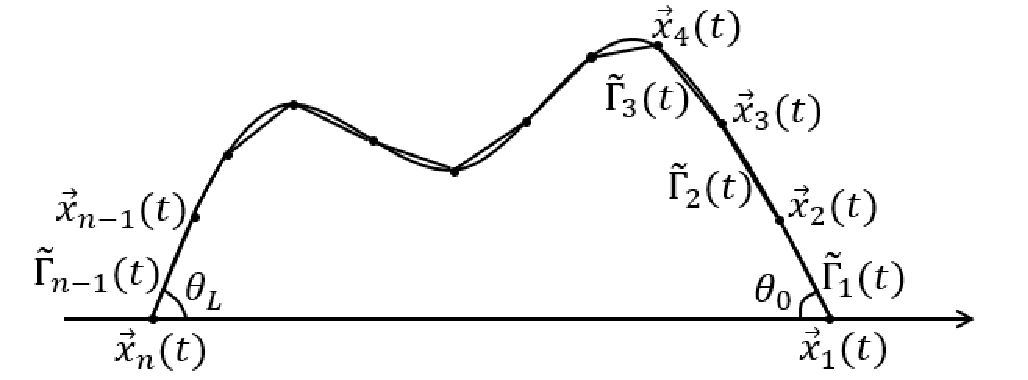}
	\caption{Selection of interpolation points.}
	\label{fig:3Interpolation curve}
\end{figure}

We will derive the dynamic equation for $\tilde{\Gamma}_h(t)$ by using the Onsager principle. Similar to that in Section \ref{general CSF discrete}, for a line segment $\tilde{\Gamma}_i(t)\ (1\leq i\leq n-1)$, the points on the segment can be expressed as 
$
\x_h(t)=(1-\mu)\x_i(t)+\mu\x_{i+1}(t), 
$
where $0\leq\mu=\frac{s}{|\x_{i+1}-\x_{i}|}\leq1$ is a parameter and $s$ is the arc length parameter of $\tilde{\Gamma}_i(t)$.
The velocity of each point $\x_h(t)$ is given by
$
\vec{v}_h(t)=(1-\mu)\dot{\x}_i(t)+\mu\dot{\x}_{i+1}(t).
$
The unit tangent vector of $\tilde{\Gamma}_i(t)$ is calculated by
$
\vec{\tau}_i=\frac{\x_{i+1}(t)-\x_i(t)}{| \x_{i+1}(t)-\x_i(t)|}.
$
The outward unit normal vector is given by
%\begin{equation}
$   \vec{n}_i=\frac{P[\x_{i+1}(t)-\x_i(t)]}{| \x_{i+1}(t)-\x_i(t)|}.$
%\end{equation}
%where the rotation matrix $P=\bigl(\begin{smallmatrix} 0&1\\-1&0 \end{smallmatrix}\bigr)$.

For the discrete curve $\tilde{\Gamma}_h(t)$, the discrete energy is calculated by
\begin{equation} \label{discrete_energy3}
	\begin{aligned}
		\tilde{\mathcal{E}}_h
		&=\gamma[|\tilde{\Gamma}_h(t)|-| \x_1(t)-\x_n(t)| \cos\theta_Y]+C\\
		&=\gamma\left \{\sum_{i=1}^{n-1}|\x_{i+1}(t)-\x_{i}(t)|-[x_1^{(1)}(t)-x_n^{(1)}(t)]\cos\theta_Y\right \}+C.
	\end{aligned}
\end{equation}
The discrete dissipation function is calculated by
\begin{equation*}
	\begin{aligned}
		\tilde{\Phi}_h
		&=\frac{\xi_0}{2}\int_{\tilde{\Gamma}_h(t)}|\vec{v}_{h}|^2\text{d}s+\frac{\xi_1}{2}v_{h,0}^2+\frac{\xi_1}{2}v_{h, L}^2\\
		&=\frac{\xi_0}{2}\sum_{i=1}^{n-1}\int_{\tilde{\Gamma}_i(t)}[(1-\mu)\dot{\x}_i(t)+\mu\dot{\x}_{i+1}(t)]^2\text{d}s+\frac{\xi_1}{2}[\dot{x}_1^{(1)}(t)]^2+\frac{\xi_1}{2}[\dot{x}_n^{(1)}(t)]^2.
	\end{aligned}
\end{equation*}
Denote by $\tilde{S}_h$ the area enclosed by the discrete curve $\tilde{\Gamma}_h(t)$ and the bottom line. Then the changing rate of $\tilde{S}_h$ with respect to time is calculated as
\begin{equation*}
	\begin{aligned}
		\frac{\text{d}\tilde{S}_h}{\text{d}t}
		&=\int_{\tilde{\Gamma}_h(t)}v_{h,n} \text{d}s
		=\sum_{i=1}^{n-1}\int_{\tilde{\Gamma}_i(t)}\vec{v}_h(t)\cdot\vec{n}_i\text{d}s. 
	\end{aligned}
\end{equation*}
Taking the horizontal right direction as the positive direction, here we denote the normal velocity at the two endpoints by
\begin{equation*}
	\dot{\x}_1(t)\cdot\vec{n}_1=\dot{x}_1^{(1)}(t)\sin\theta_0,\quad
	\dot{\x}_n(t)\cdot\vec{n}_{n-1}=-\dot{x}_n^{(1)}(t)\sin\theta_L,
\end{equation*}
where
\begin{equation*}
	\sin\theta_0=\frac{x_2^{(2)}(t)}{| \x_2(t)-\x_1(t)|},\quad \sin\theta_L=\frac{x_{n-1}^{(2)}(t)}{| \x_n(t)-\x_{n-1}(t)|}.
\end{equation*}
By using the Onsager principle, we minimize the discrete Rayleighian with respect to $\dot{\x}_i (2\leq i \leq n-1), \dot{x}_1^{(1)}, \dot{x}_n^{(1)}$ under the area conservation constraint, i.e.
\begin{equation}
	\begin{aligned}
		&\min_{\dot{\x}_i, \dot{x}_1^{(1)}, \dot{x}_n^{(1)}}\ \tilde{\mathcal{R}}_h:=\tilde{\Phi}_h+\dot{\tilde{\mathcal{E}}}_h,\\
		&s.t.\ \frac{\text{d}\tilde{S}_h}{\text{d}t}=0.
	\end{aligned}
\end{equation}
The corresponding discrete augmented Lagrangian is defined as
\begin{equation*}
	\tilde{\mathcal{L}}_h(\dot{\x}_i, \dot{x}_1^{(1)}, \dot{x}_n^{(1)}, \lambda) = \tilde{\mathcal{R}}_h + \lambda\frac{\text{d}\tilde{S}_h}{\text{d}t}.
\end{equation*}
We set
\begin{equation*}
	\left \{ \begin{array}{rcl}
		\frac{\partial\tilde{\mathcal{L}}_h}{\partial \dot{\x}_i} = 0, \vspace{1ex} \\
		\frac{\partial\tilde{\mathcal{L}}_h}{\partial \dot{x}_1^{(1)}} = 0, \vspace{1ex} \\
		\frac{\partial\tilde{\mathcal{L}}_h}{\partial \dot{x}_n^{(1)}} = 0, \vspace{1ex} \\
		\frac{\partial\tilde{\mathcal{L}}_h}{\partial \lambda} = 0.
	\end{array} \right .
\end{equation*}
The corresponding Euler-Lagrange equation is given by
\begin{equation} \label{3discrete E-L}
	\left\{\begin{array}{ll}
		\frac{\partial\tilde{\Phi}_h}{\partial\dot{\x}_i} + \lambda\frac{\partial(\tilde{S}_h)_t}{\partial\dot{\x}_i} = -\frac{\partial\tilde{\mathcal{E}}_h}{\partial\x_i}, \quad 2\leq i\leq n-1, \vspace{1ex} \\
		\frac{\partial\tilde{\Phi}_h}{\partial\dot{x}_1^{(1)}} + \lambda\frac{\partial(\tilde{S}_h)_t}{\partial\dot{x}_1^{(1)}} = -\frac{\partial\tilde{\mathcal{E}}_h}{\partial x_1^{(1)}}, \vspace{1ex} \\
		\frac{\partial\tilde{\Phi}_h}{\partial\dot{x}_n^{(1)}} + \lambda\frac{\partial(\tilde{S}_h)_t}{\partial\dot{x}_n^{(1)}} = -\frac{\partial\tilde{\mathcal{E}}_h}{\partial x_n^{(1)}}, \vspace{1ex} \\
		\frac{\text{d}\tilde{S}_h}{\text{d}t} = 0.
	\end{array} 
	\right.
\end{equation}
This is a system of differential-algebraic equations with respect to $\dot{\x}_i\ (2\leq i \leq n-1), \dot{x}_1^{(1)}, \dot{x}_n^{(1)}$ and $\lambda$.

For convenience of representation, we add two equations $\dot{x}_1^{(2)}=0, \dot{x}_n^{(2)}=0$ to \eqref{3discrete E-L} and denote $\dot{X}=(\dot{\x}_{1}^\top,\dot{\x}_{2}^\top,...,\dot{\x}_{n}^\top,\lambda)^\top$. Observe that $\tilde{\Phi}_h$ is a quadratic function with respect to $\dot{\x}_i\ (1\leq i\leq n)$. Then the system \eqref{3discrete E-L} can be rewritten as 
\begin{equation}
	\tilde{A}\dot{X}=\tilde{G},\label{3AV=G}
\end{equation}
where $\tilde{A}$ is a symmetric matrix and $\tilde{G}=(\tilde{\vec{g}}_1^\top,\tilde{\vec{g}}_2^\top,...,\tilde{\vec{g}}_n^\top,\tilde{g}_{n+1})^\top$. The explicit formulas for $\tilde{A}$ and $\tilde{G}$ are given in Appendix.

We can easily prove the discrete energy dissipative property of the ODE system \eqref{3AV=G}.
\begin{theorem}
	Suppose $\dot{X}=(\dot{\x}_{1}^\top,\dot{\x}_{2}^\top,...,\dot{\x}_{n}^\top,\lambda)^\top$ is the solution of Eq. \eqref{3AV=G}, then we have
	\begin{equation*}
		\frac{\text{d}\tilde{\mathcal{E}}_h}{\text{d}t}=-2\tilde{\Phi}_h \leq 0,
	\end{equation*}
	where the equality holds if and only if $\dot{\x}_i=0\ (0\leq i \leq n)$.
\end{theorem}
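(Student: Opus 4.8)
The plan is to follow the exact same argument used in the proofs of Theorem \ref{energy decreasing} and Theorem \ref{Th2}, exploiting the block structure of the augmented system \eqref{3AV=G}. First I would introduce the notation $A_0 := \tilde{A}(1,\ldots,2n\,|\,1,\ldots,2n)$ for the upper-left $2n\times 2n$ block of $\tilde{A}$ and $\dot{X}_0 := \dot{X}(1,\ldots,2n)$ for the velocity part of the solution, so that by construction $\tilde{\Phi}_h = \frac{1}{2}\dot{X}_0^\top A_0 \dot{X}_0$. Then I would compute $\frac{\text{d}\tilde{\mathcal{E}}_h}{\text{d}t}$ by the chain rule, splitting the sum over the free degrees of freedom exactly as they appear in \eqref{3discrete E-L}: the interior nodes $\dot{\x}_i$ for $2\le i\le n-1$, plus the two boundary components $\dot{x}_1^{(1)}$ and $\dot{x}_n^{(1)}$. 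The artificially appended trivial equations $\dot{x}_1^{(2)}=0$, $\dot{x}_n^{(2)}=0$ ensure the missing components contribute nothing, so the chain-rule sum over all of $\tilde{\mathcal{E}}_h$'s dependence on $\x_i$ is accounted for.

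Next I would substitute the Euler-Lagrange relations from \eqref{3discrete E-L}: each $-\frac{\partial\tilde{\mathcal{E}}_h}{\partial(\cdot)}$ is replaced by $\frac{\partial\tilde{\Phi}_h}{\partial(\cdot)} + \lambda\frac{\partial(\tilde{S}_h)_t}{\partial(\cdot)}$. After contracting with the corresponding velocity component and summing, the $\tilde{\Phi}_h$ part collapses (by Euler's identity for the quadratic form, or equivalently by reading off $\dot{X}_0^\top A_0\dot{X}_0$) to $2\tilde{\Phi}_h$, and the multiplier part collapses to $\lambda\,\frac{\text{d}\tilde{S}_h}{\text{d}t}$ since $(\tilde{S}_h)_t$ is linear in the velocities. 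The last equation of \eqref{3discrete E-L}, namely $\frac{\text{d}\tilde{S}_h}{\text{d}t}=0$, kills that term, leaving $\frac{\text{d}\tilde{\mathcal{E}}_h}{\text{d}t} = -2\tilde{\Phi}_h$. Finally, I would note that $A_0$ is symmetric positive definite — it is the Gram-type matrix of the discrete kinetic form $\tilde{\Phi}_h = \frac{\xi_0}{2}\int_{\tilde{\Gamma}_h}|\vec{v}_h|^2 + \frac{\xi_1}{2}(v_{h,0}^2 + v_{h,L}^2)$, which is a sum of a positive semidefinite mass-matrix term and strictly positive boundary penalties, and is nondegenerate on the space of admissible velocities (the trivial rows for $\dot{x}_1^{(2)},\dot{x}_n^{(2)}$ being handled separately) — so $\tilde{\Phi}_h \ge 0$ with equality iff $\dot{X}_0 = 0$, i.e. iff all $\dot{\x}_i = 0$.

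The only genuinely delicate point is the positive-definiteness of $A_0$ restricted to the relevant space, because the contact-line constraints force $\dot{x}_1^{(2)}=\dot{x}_n^{(2)}=0$: one must check that after imposing these the bulk mass matrix plus the two $\xi_1$-weighted boundary terms is nondegenerate, which follows from the mass matrix being positive definite on the full vertex-velocity space (as in Section \ref{general CSF discrete}) together with the nonnegativity of the added boundary terms. I expect this to be routine given the structure already established, so the remainder is purely the chain-rule bookkeeping identical to the previous two theorems; accordingly the proof is short.

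\begin{proof}
	Introduce the notation $A_0:=\tilde{A}(1,\ldots,2n\,|\,1,\ldots,2n)$ and $\dot{X}_0:=\dot{X}(1,\ldots,2n)$, so that $\tilde{\Phi}_h=\frac{1}{2}\dot{X}_0^\top A_0\dot{X}_0$. Using the chain rule together with the first three groups of equations in \eqref{3discrete E-L} and the appended trivial equations $\dot{x}_1^{(2)}=\dot{x}_n^{(2)}=0$, we compute
	\begin{equation*}
		\begin{aligned}
			\frac{\text{d}\tilde{\mathcal{E}}_h}{\text{d}t}
			&=\sum_{i=1}^n\frac{\partial\tilde{\mathcal{E}}_h}{\partial\x_i}\cdot\dot{\x}_i
			=-\sum_{i=1}^n\left[\frac{\partial\tilde{\Phi}_h}{\partial\dot{\x}_i}+\lambda\frac{\partial(\tilde{S}_h)_t}{\partial\dot{\x}_i}\right]\cdot\dot{\x}_i\\
			&=-\dot{X}_0^\top A_0\dot{X}_0-\lambda\frac{\text{d}\tilde{S}_h}{\text{d}t}
			=-\dot{X}_0^\top A_0\dot{X}_0=-2\tilde{\Phi}_h,
		\end{aligned}
	\end{equation*}
	where in the last line we used the area conservation constraint $\frac{\text{d}\tilde{S}_h}{\text{d}t}=0$ from \eqref{3discrete E-L}. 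Since $A_0$ is symmetric positive definite (it is the matrix of the quadratic form $\tilde{\Phi}_h$, a sum of the positive definite mass-matrix term $\frac{\xi_0}{2}\int_{\tilde{\Gamma}_h(t)}|\vec{v}_h|^2\text{d}s$ and the nonnegative boundary terms $\frac{\xi_1}{2}v_{h,0}^2+\frac{\xi_1}{2}v_{h,L}^2$), we have $\tilde{\Phi}_h\geq 0$, with equality if and only if $\dot{X}_0=0$, i.e. $\dot{\x}_i=0$ for all $i$. This proves the theorem.
\end{proof}
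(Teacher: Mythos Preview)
Your proposal is correct and follows exactly the approach the paper intends: the paper explicitly says ``The proof is similar to that of Theorem \ref{Th2} and we skip it for simplicity in presentation,'' and you have written out precisely that argument, adapting the chain-rule computation and the constraint cancellation to the wetting setting with the extra care for the endpoint degrees of freedom. The only minor quibble is that $A_0$ as literally defined (the upper-left $2n\times 2n$ block of $\tilde{A}$) is not quite the matrix of the quadratic form $\tilde{\Phi}_h$ because of the artificially appended identity entries in the rows for $\dot{x}_1^{(2)}$ and $\dot{x}_n^{(2)}$; but since those components vanish on solutions, $\frac{1}{2}\dot{X}_0^\top A_0\dot{X}_0=\tilde{\Phi}_h$ holds and the rest of your argument goes through unchanged.
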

%\begin{proof}
The proof is similar to that of Theorem \ref{Th2} and we skip it for simplicity in presentation.
%\end{proof}

As in the section \ref{general penalty}, we  add a penalty term to the energy function $\tilde{\mathcal{E}}_h$ in order to make the nodes on the discrete curve uniformly distributed. The modified energy function %after adding the penalty term 
is denoted by
\begin{equation*}
	\tilde{\mathcal{E}}_h^{\delta}=\tilde{\mathcal{E}}_h+\delta\sum_{i=1}^{n-2}\left (\frac{|\tilde{\Gamma}_i(t)|}{|\tilde{\Gamma}_{i+1}(t)|}-1\right )^2.
\end{equation*}
We generally take $\delta=\frac{1}{n-2}$ in order to reduce the impact of the penalty term on $\tilde{\mathcal{E}}_h$. The corresponding Euler-Lagrange equation is 
\begin{equation}\label{3AV=G penalty term}
	\tilde{A}\dot{X}=\tilde{G}^{\delta}.
\end{equation}

\section{Numerical experiments}
We will present some experimental simulation results in this section. In simulations, we solve this problems using the improved Euler's method as follows
\begin{equation} \label{improved Euler}
	\left \{ \begin{array}{rcl}
		\text{predictor}:{\Bar{\Bar X}}^{k+1}&=&X^k+\dot{X}^k\Delta t, \vspace{1ex} \\
		\text{corrector}:X^{k+1}&=&X^k+\frac{(\dot{X}^k+\dot{{\Bar{\Bar X}}}^{k+1})}{2}\Delta t.
	\end{array} \right .
\end{equation}
Where $X^k$ represents the position of the curve at time $t_k$. For Eq. \eqref{AV=G penalty term}, $\dot{X}^k=A^{-1}(X^k)G^\delta(X^k)$ denotes the time derivative of $X^k$, and $\dot{\Bar{\Bar X}}^{k+1}=A^{-1}(\Bar{\Bar X}^{k+1})G^\delta(\Bar{\Bar X}^{k+1})$. For Eqs. \eqref{2AV=G penalty term} and \eqref{3AV=G penalty term}, we use $\hat{A}$, $\hat{G}^{\delta}$ and $\tilde{A}$, $\tilde{G}^{\delta}$ respectively.

\subsection{Mean curvature flow}
\subsubsection{The evolution of a circular curve} \label{experi1_circle}
We consider the evolution of a circle under mean curvature flow. Notice that the curve will always be circular in this case. We can assume that
\begin{equation}
	\Gamma(t):=\{\x(\alpha, t)=\rho(t)(\cos\alpha,\sin\alpha)^\top,\quad 0\leq \alpha \leq 2\pi\},\label{x(t,theta)}
\end{equation}
where $\rho(t)$ denotes the radius of the circle at time $t$, and it satisfies the initial condition $\rho(0)=R_0$. By substituting Eq. \eqref{x(t,theta)} into Eq. \eqref{mean curvature E-L eqaution}, we have
\begin{equation}
	\frac{\text{d}\rho}{\text{d}t}=-\frac{1}{\rho(t)}.\label{circle}
\end{equation}
Solving Eq. \eqref{circle}, we can obtain the evolution equation of the curve $\Gamma(t)$
\begin{equation} \label{exact solution}
	\x(\alpha, t)=\sqrt{R_0^2-2t}(\cos\alpha,\sin\alpha)^\top,\quad 0\leq \alpha \leq 2\pi.
\end{equation}
Hence, under mean curvature flow, a circle of initial radius $R_0$ gradually shrinks to a circle of radius $\sqrt{R_0^2-2t}$ at time $t$, and eventually collapses to a point at time $t=\frac{R_0^2}{2}$.

In our numerical simulations, the initial curve is taken as the unit circle. The circle is discretized uniformly with $\x_i=(\cos\frac{2\pi}{n}i, \sin\frac{2\pi}{n}i)^\top$. The shape of the curve at some time steps are shown in Fig. \ref{fig:circle}, while the change of the discrete energy function $\mathcal{E}_h(t)$ over time is shown in Fig. \ref{fig: circle energy}. We can see that discrete curves are circular and the discrete energy decreases gradually.  
\begin{figure}[h]
	\centering
	\subfigure[t=0]{
		\includegraphics[width=1.5in]{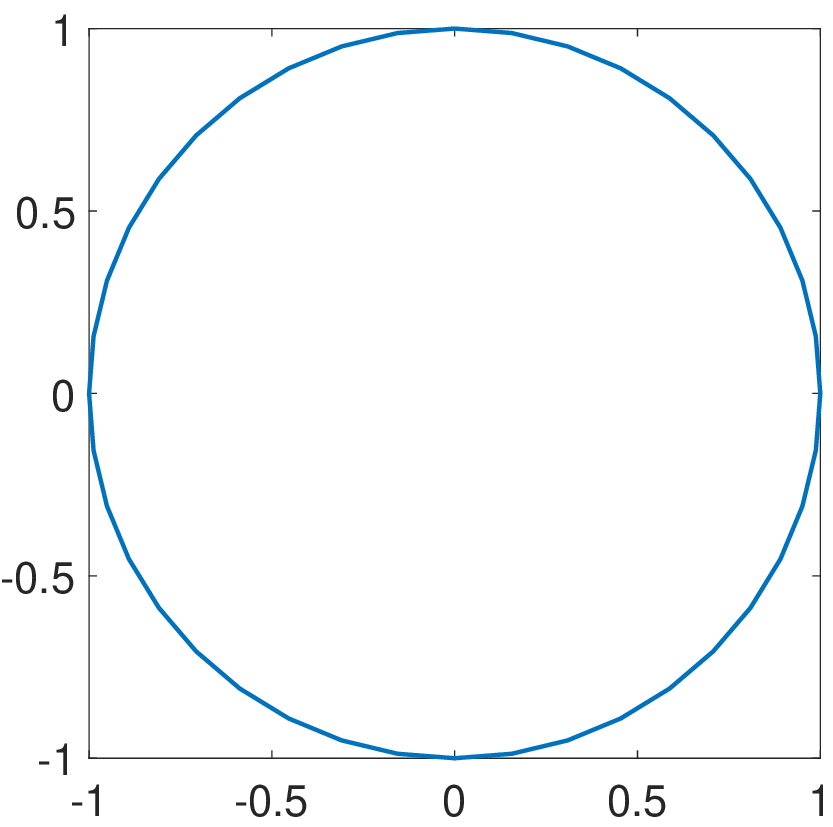}
	}
	\subfigure[t=0.1]{
		\includegraphics[width=1.5in]{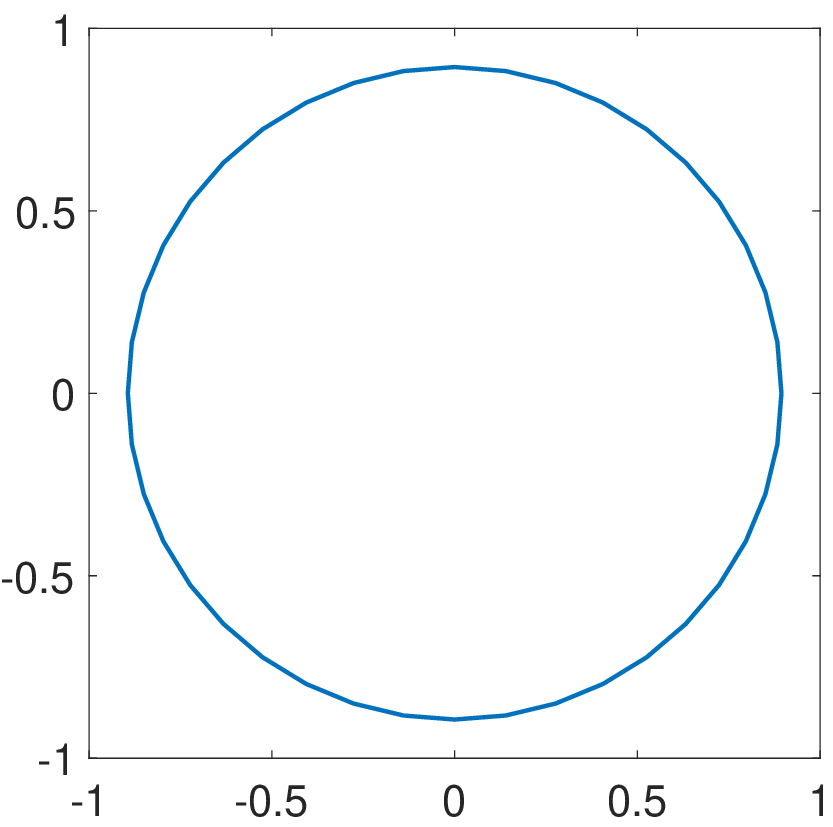}
	}
	\subfigure[t=0.2]{
		\includegraphics[width=1.5in]{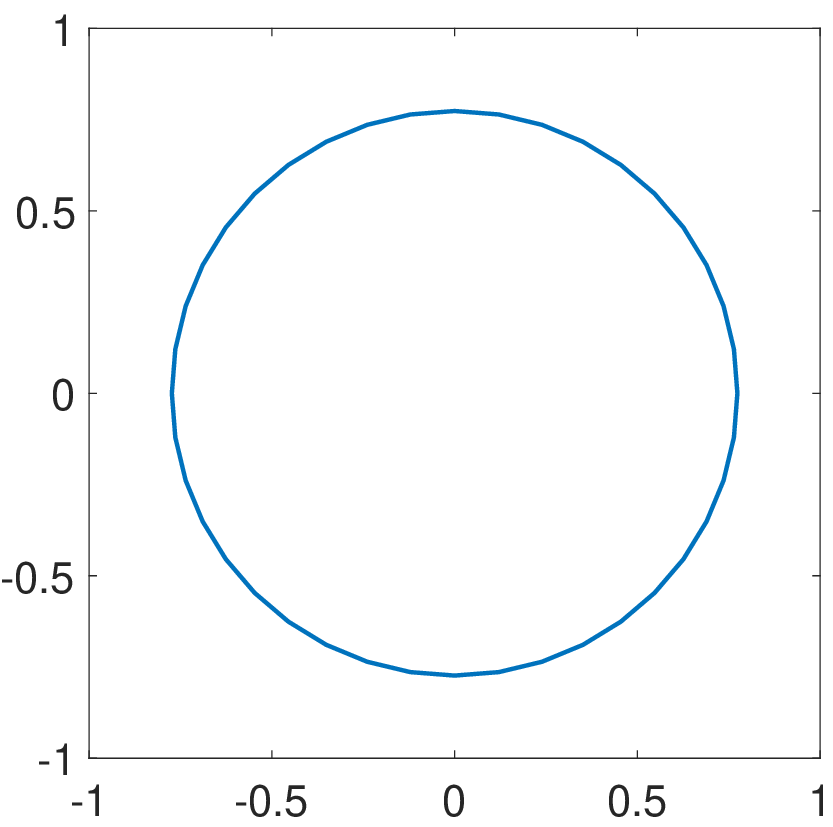}
	}
	
	\subfigure[t=0.3]{
		\includegraphics[width=1.5in]{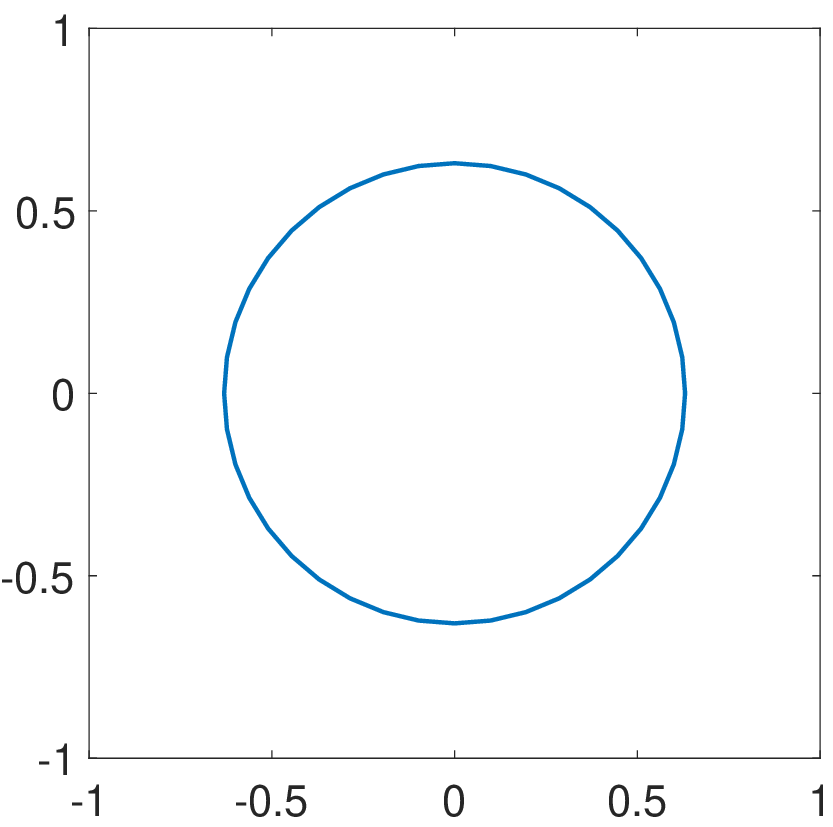}
	}
	\subfigure[t=0.4]{
		\includegraphics[width=1.5in]{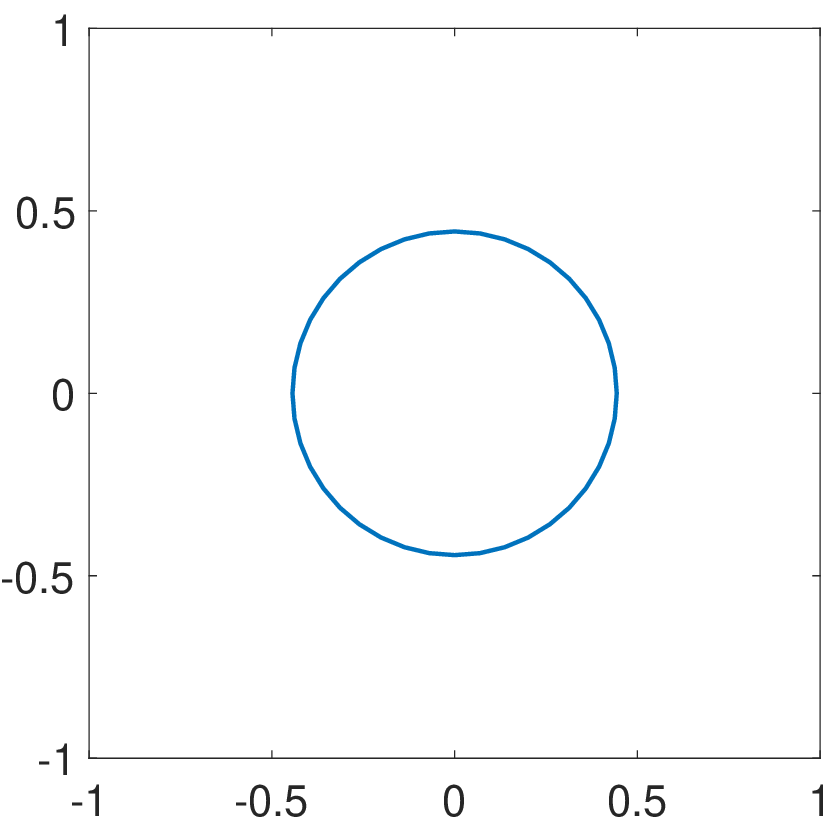}
	}
	\subfigure[t=0.488]{
		\includegraphics[width=1.5in]{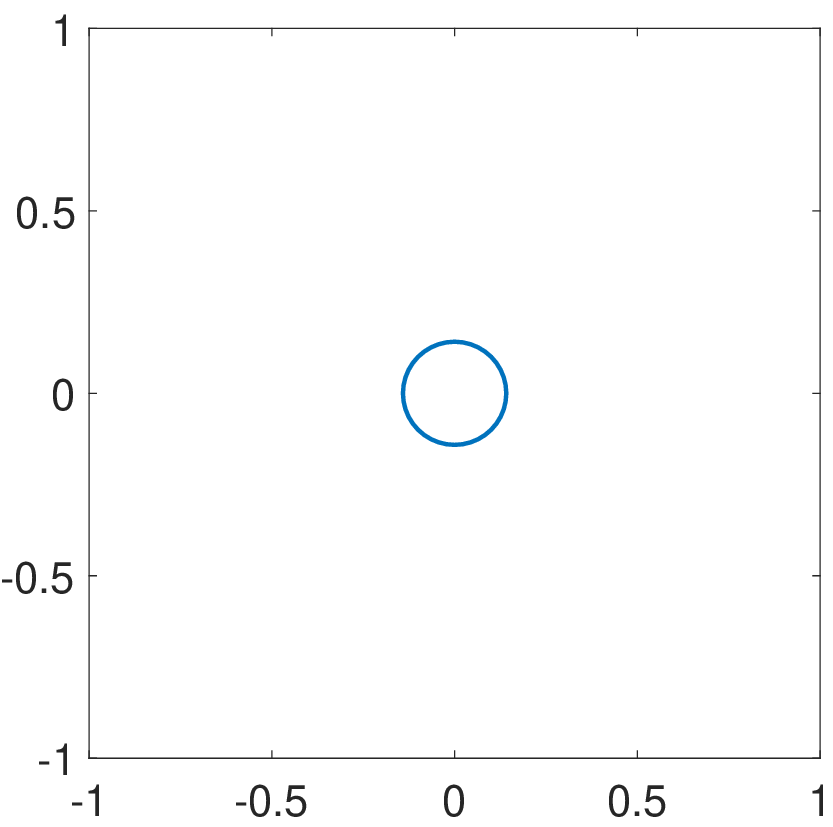}
	}
	\caption{Evolution of a unit circle under mean curvature flow (n=40).}
	\label{fig:circle}
\end{figure}

\begin{figure}[h]
	\centering
	\includegraphics[width=0.5\textwidth]{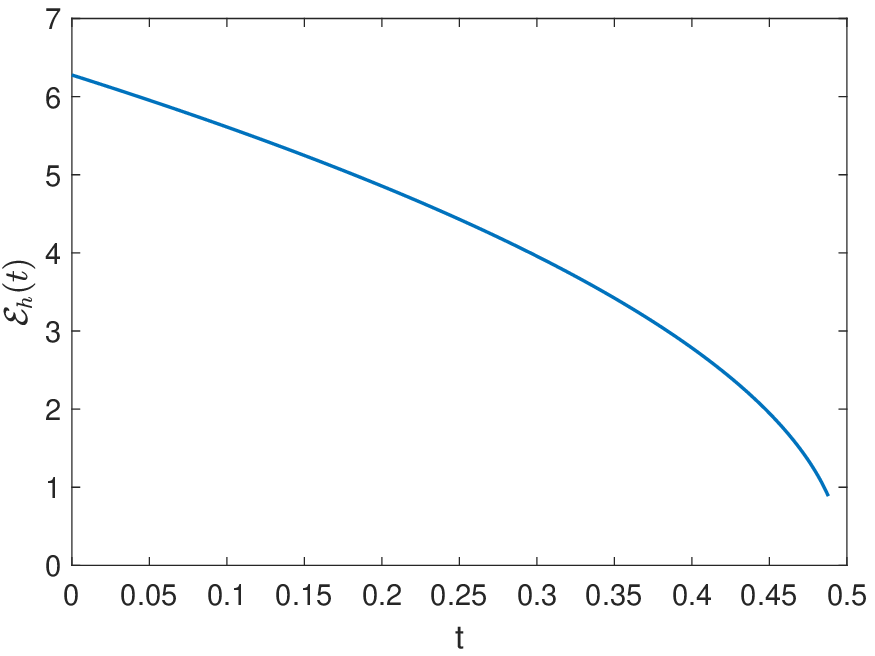}
	\caption{The change of the discrete energy $\mathcal{E}_h(t)$ with respect to time.}
	\label{fig: circle energy}
\end{figure} 

Since the explicit solution is known in this example (as shown in \eqref{exact solution}), we can compute the errors of the numerical solutions. We denote the point on the discrete curve $\Gamma_h(t)$ by $\x_h(\alpha, t)$. We define the error as
\begin{equation} \label{err1}
	err_1:=\max_{\x_h\in \Gamma_h(t)} \text{dist}(\x_h, \Gamma(t)).
\end{equation}
The convergence order, which characterizes the rate of convergence, is calculated as follows
\begin{equation}\label{order}
	order:=\frac{\ln( err_1^{(k)}/err_1^{(k+1)} )}{\ln(n_{k+1}/n_k)}.
\end{equation}

We set $T=0.2$ and choose a time step size of $\Delta t=0.00025$, which is small enough so that the error with respect to time discretization is of higher order. Taking the number of nodes as $n=5,10,20,40,80$, the corresponding numerical errors and convergence order are presented in Table \ref{err order}. We could see the method has second order convergent in distance norm with respect to the spacial mesh size.
\begin{table}[htbp]   
	\begin{center}   
		\caption{Error and convergence order of the numerical method in Section \ref{general CSF discrete}.}  
		\label{err order} 
		\begin{tabular}{c|c c c c c}   
			\hline \textbf{\em n} & 5 & 10 & 20 & 40 & 80 \\   
			\hline \textbf{\em $err_1$} & 8.1561e-02 & 1.7758e-02 & 4.2941e-03 & 1.0647e-03 & 2.6563e-04  \\ 
			\textbf{\em $Order$} & - & 2.20 & 2.05 & 2.01 & 2.00  \\      
			\hline    
		\end{tabular}   
	\end{center}   
\end{table}

\subsubsection{The evolution of a flower-shaped curve} \label{flower1}
We consider the evolution of a flower-shaped curve under mean curvature flow. The parametric equation for the initial flower-shaped curve is given by:
\begin{equation} \label{flower equation}
	\x(\alpha) = (2 - 2^{\sin(5\alpha)})(\cos\alpha, \sin\alpha)^\top,\quad 0\leq \alpha \leq 2\pi.
\end{equation}
We discretize the curve uniformly in $\alpha$ by setting $\x_i=\x(\alpha_i)$ with $\alpha_i=\frac{2\pi}{n}i$. The evolution of the curve is plotted in Fig. \ref{fig:flower}, and the change of the discrete energy function $\mathcal{E}_h(t)$ is illustrated in Fig. \ref{fig: flower energy}. 
We can see that the curve gradually changes to a circular shape and also shrinks in length.
We can also see how the discrete energy function gradually decreases as the curve evolves under mean curvature flow. 
\begin{figure}[h]
	\centering
	\subfigure[t=0]{
		\includegraphics[width=1.3in]{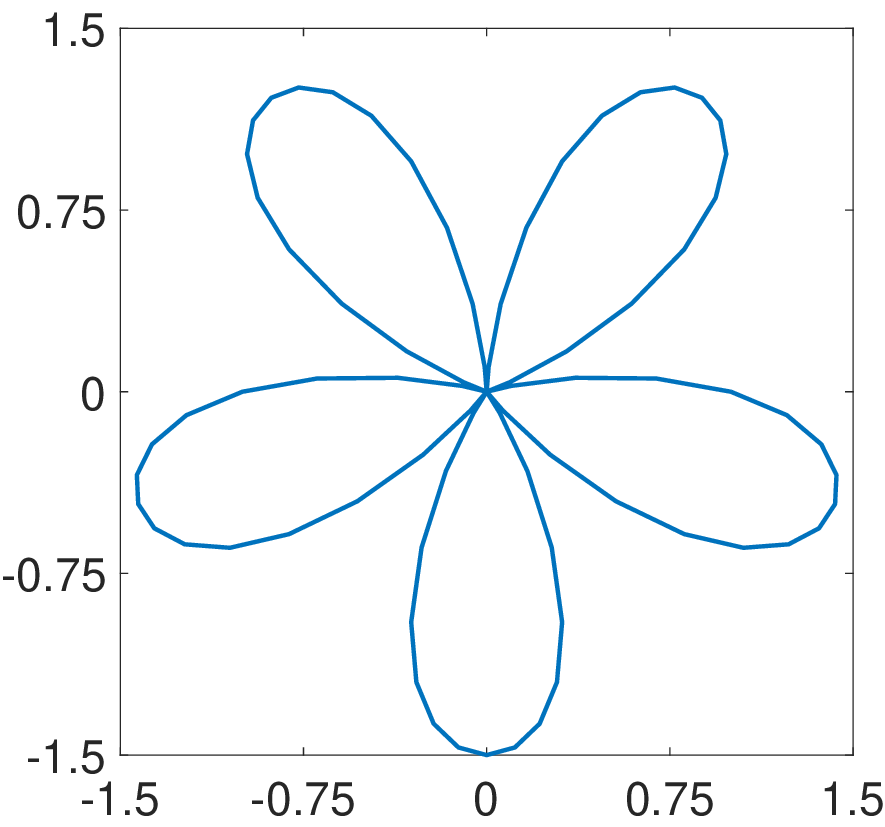}
	}
	\subfigure[t=0.03]{
		\includegraphics[width=1.3in]{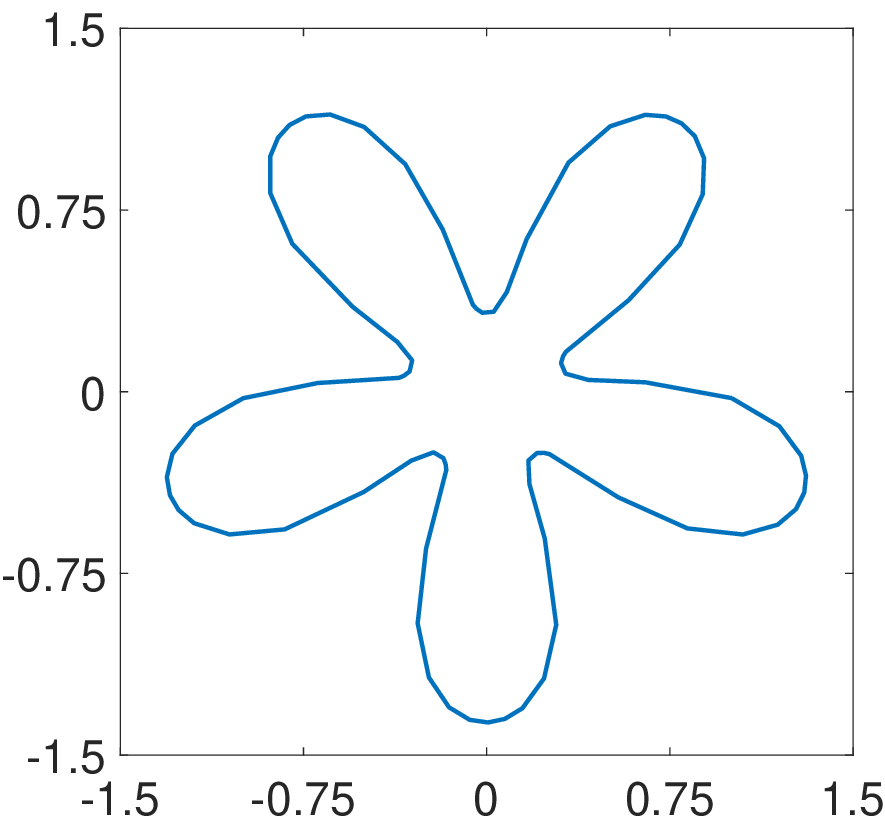}
	}
	\subfigure[t=0.06]{
		\includegraphics[width=1.3in]{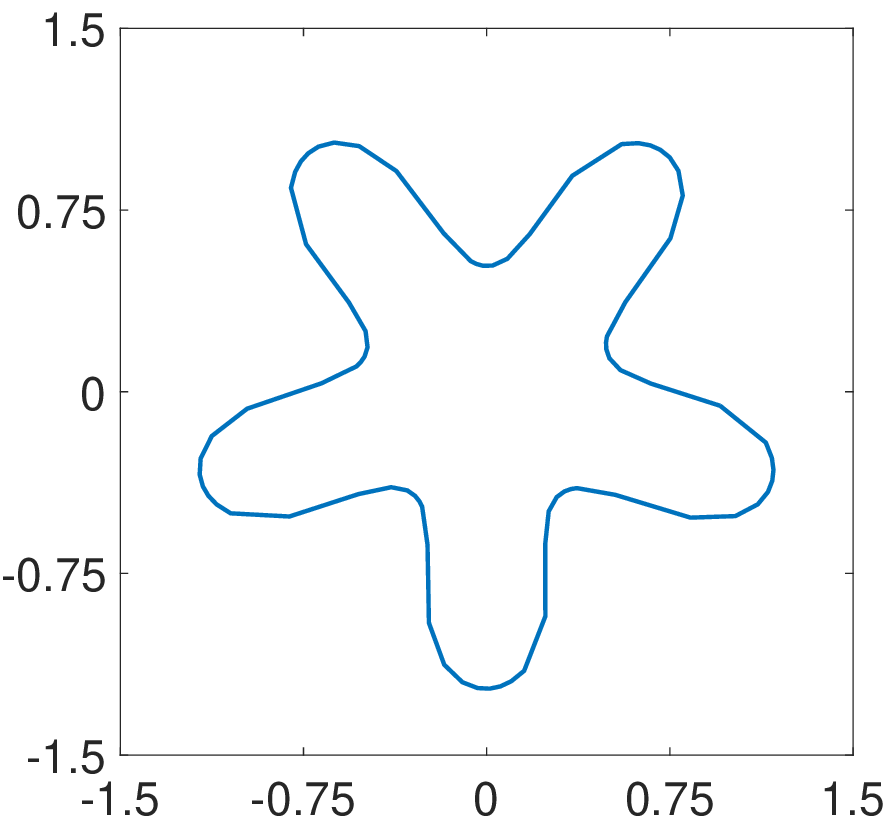}
	}
	
	\subfigure[t=0.09]{
		\includegraphics[width=1.3in]{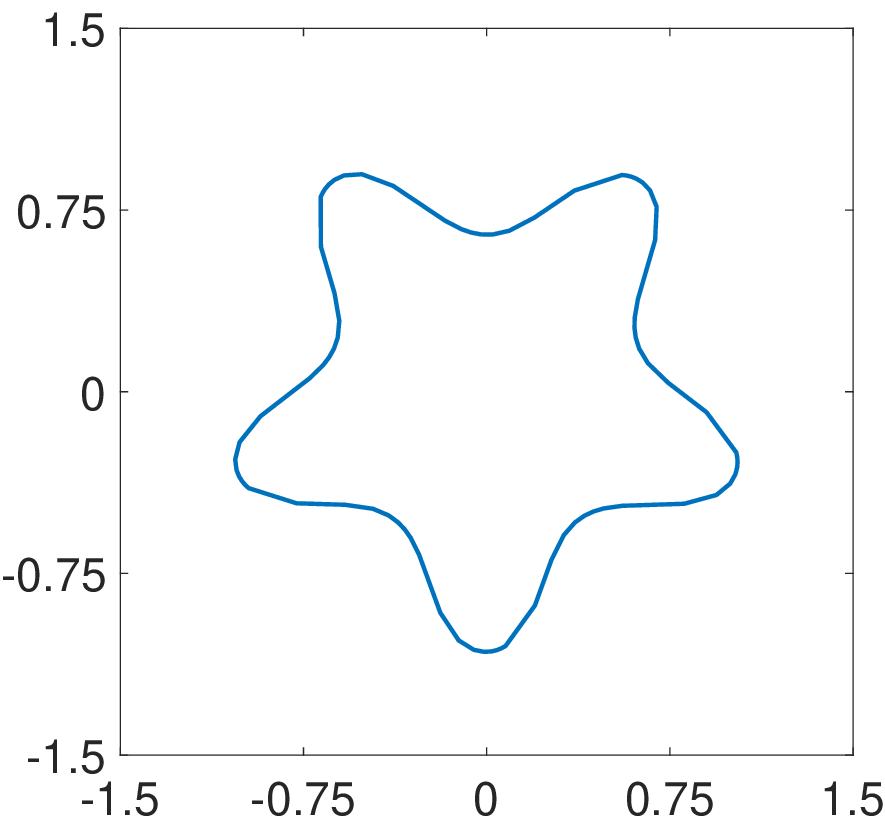}
	}
	\subfigure[t=0.12]{
		\includegraphics[width=1.3in]{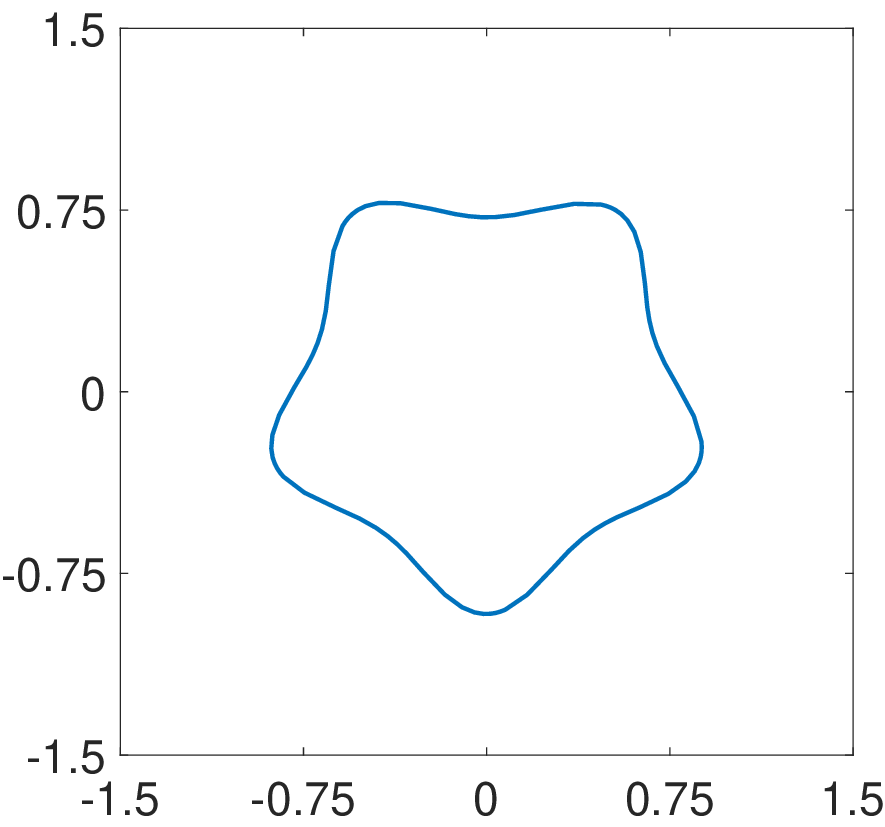}
	}
	\subfigure[t=0.15]{
		\includegraphics[width=1.3in]{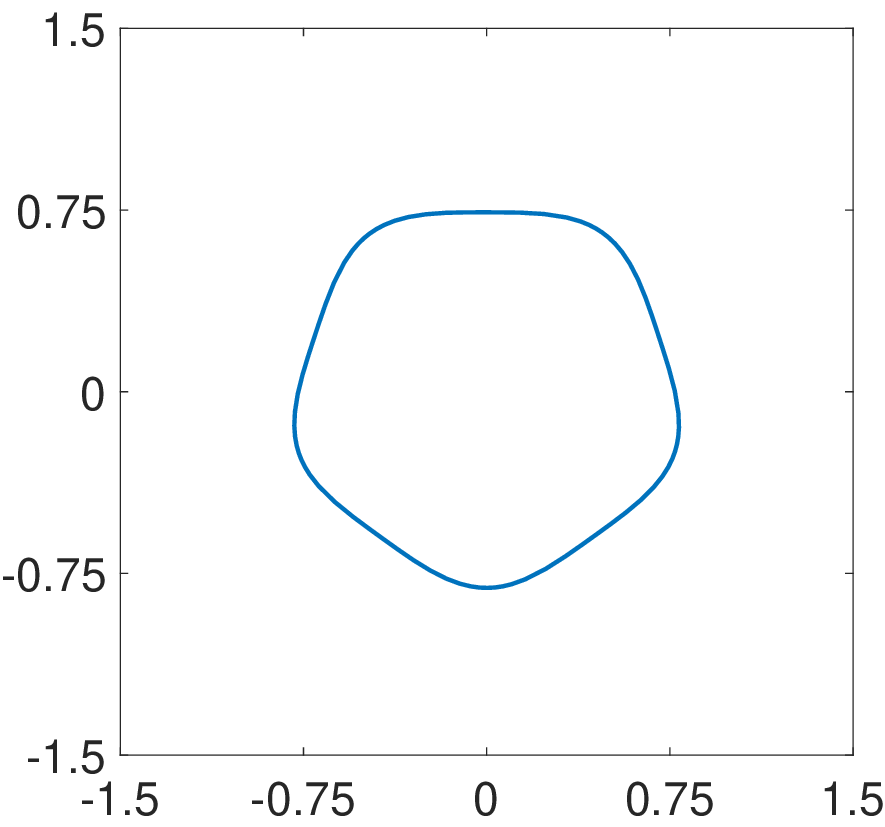}
	}
	
	\subfigure[t=0.18]{
		\includegraphics[width=1.3in]{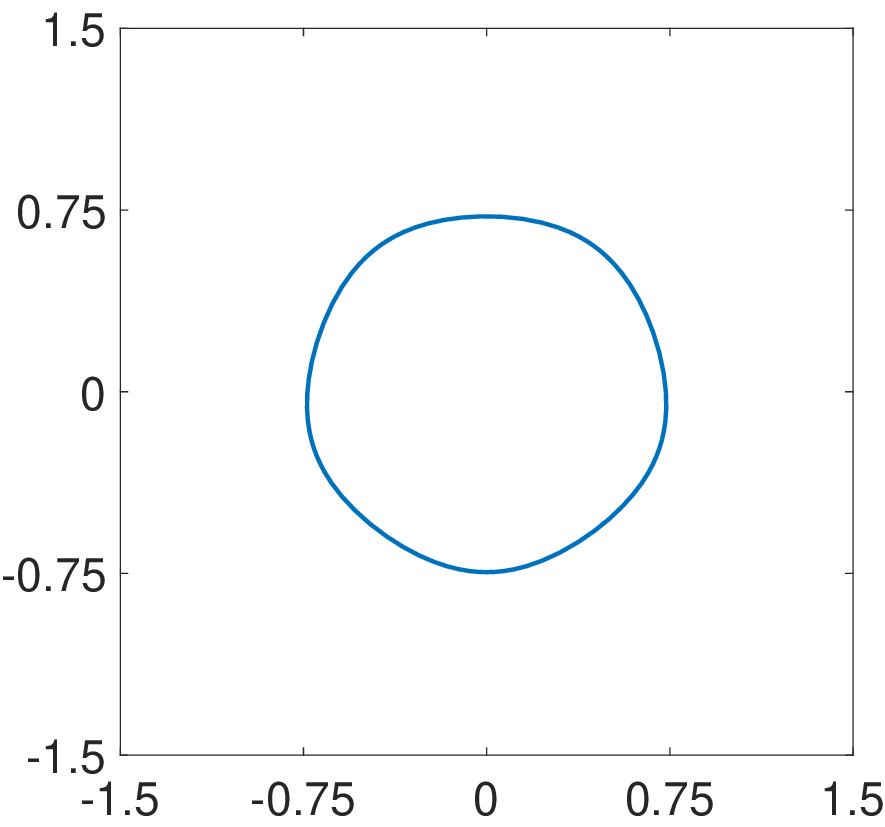}
	}
	\subfigure[t=0.3]{
		\includegraphics[width=1.3in]{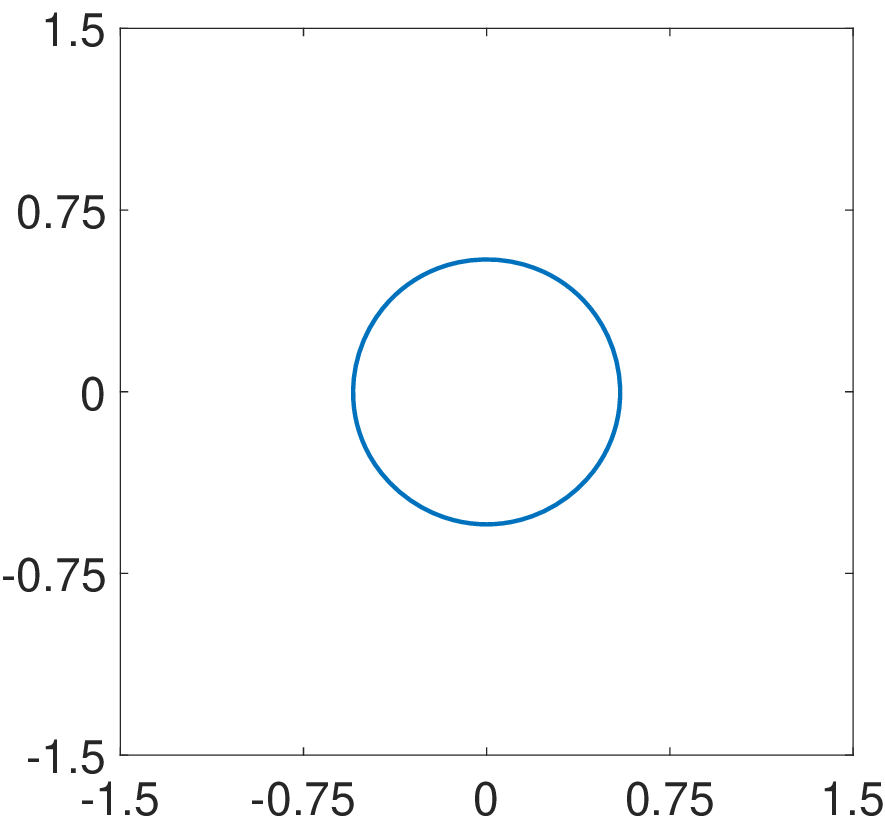}
	}
	\subfigure[t=0.41]{
		\includegraphics[width=1.3in]{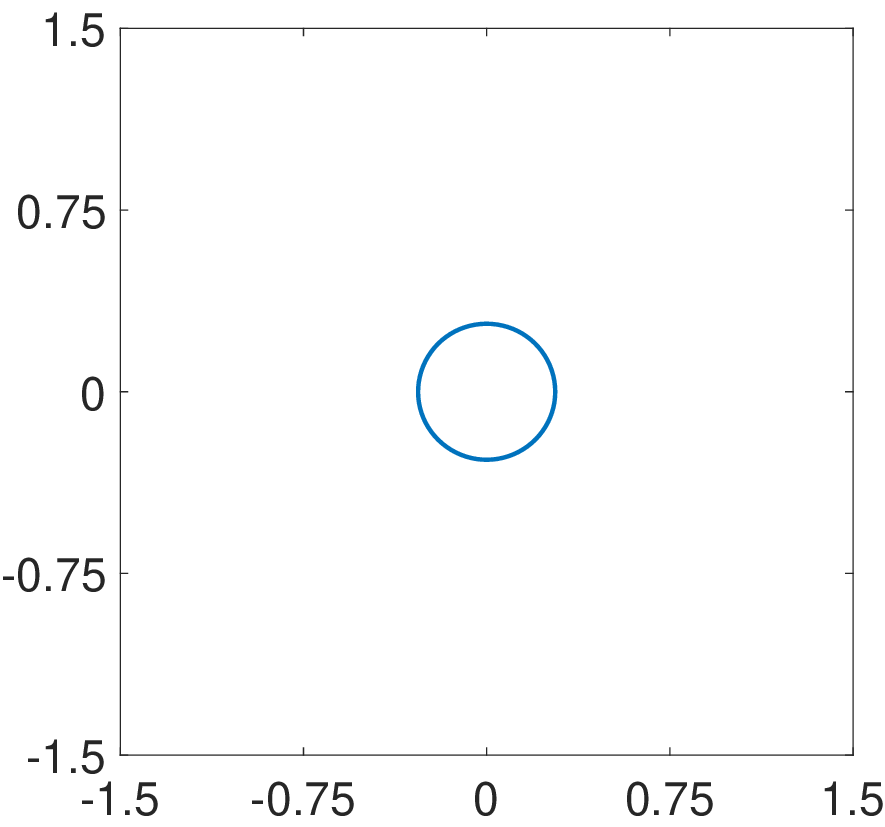}
	}
	\caption{Evolution of a flower-shaped curve under mean curvature flow (n=80).}
	\label{fig:flower}
\end{figure}
\begin{figure}[h]
	\centering
	\includegraphics[width=0.5\textwidth]{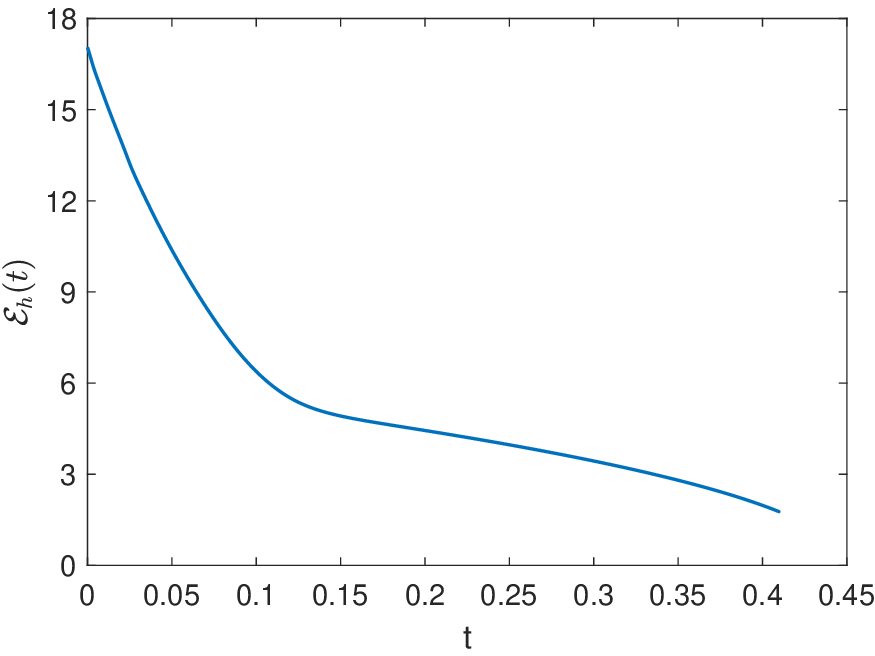}
	\caption{The change of the discrete energy $\mathcal{E}_h(t)$ with respect to time.}
	\label{fig: flower energy}
\end{figure}

\subsubsection{Effect of the penalty term}
In section \ref{general penalty}, we  have added a penalty term into the discrete energy function $\mathcal{E}_h$ to avoid mesh degeneracy. Here, we will show the effect of the penalty. Define a mesh ratio indicator (MRI) $\Psi(t)$ of the discrete curve $\Gamma_h(t)$ (as in \cite{zhao2021energy}) that	
\begin{equation*}
	\Psi(t) = \frac{\max\limits_{1\leq i\leq n} |\x_{i+1} - \x_{i}|}{\min\limits_{1\leq i\leq n} |\x_{i+1} - \x_{i}|}.
\end{equation*}

For the circular curve case, the change of the MRI function $\Psi(t)$ is shown in Fig. \ref{fig:penalty_circle}. Notably, the MRI function $\Psi(t)$ remains constant $1$ (i.e. the mesh maintain a uniform distribution) for both situations with or without the penalty term. 
In this case, the penalty term is not necessary. %This suggests that for the evolution of a unit circle under mean curvature flow, the presence or absence of the penalty term has minimal impact on mesh quality.
However, for the flower-shaped curve case, the situation is different, as shown in Fig. \ref{fig:penalty_flower}.
%For the discrete energy function $\mathcal{E}_h(t)$ with and without the penalty term in Section \ref{flower1}, the change of the MRI function $\Psi(t)$ is shown in Fig. \ref{fig:penalty_flower}. 
%The dashed line in the graph represents $\Psi(t)=1$. 
It is observed that without the penalty term, $\Psi(t)$ changes drastically over time, indicating very poor mesh property. In comparison, if we include the penalty term, the change of  $\Psi(t)$ is mild and  goes to $1$ gradually. %The mesh quality is suggesting that the mesh tends to be uniformly distributed. 
Therefore, adding a penalty term in the energy significantly improves the mesh quality during the evolution of a flower-shaped curve under mean curvature flow. We also test the effect of the the penalty term for the numerical experiments in next subsections. The observations are similar.  The penalty term seems not necessary for smooth curves like circles while it helps to improve the mesh quality for more singular curves.

\begin{figure}[h]
	\centering
	\subfigure[$\mathcal{E}_h(t)$ does not include the penalty term.]{
		\includegraphics[width=2.8in]{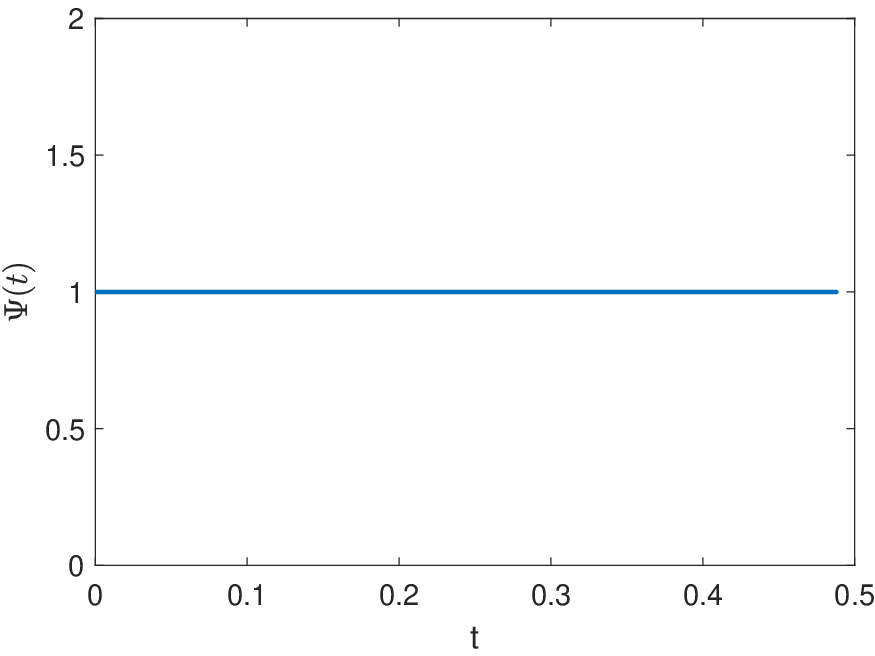}
	}
	\subfigure[Adding the penalty term to $\mathcal{E}_h(t)$.]{
		\includegraphics[width=2.8in]{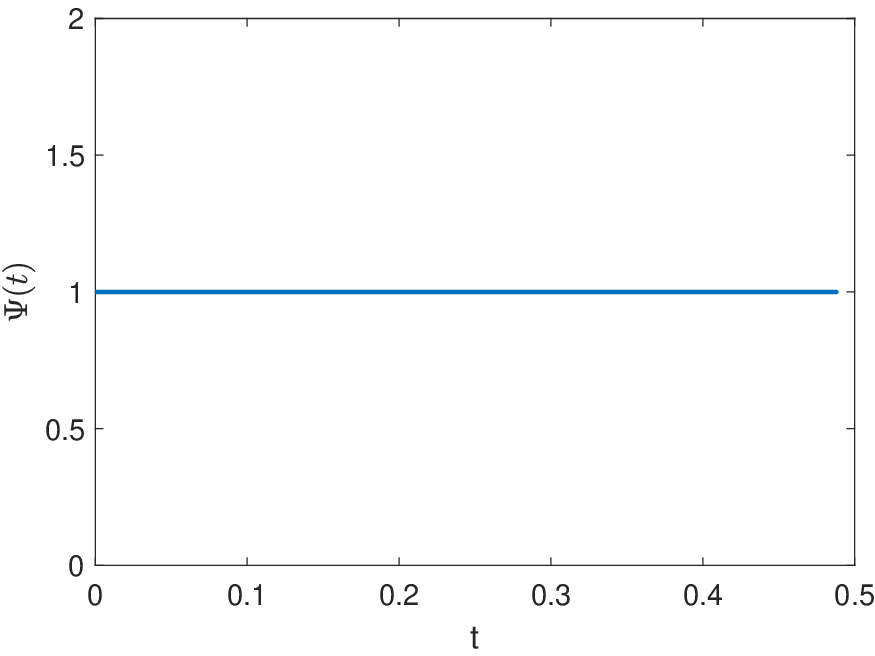}
	}
	\caption{The change of the MRI function $\Psi(t)$ over time for the circular curve.}
	\label{fig:penalty_circle}
\end{figure}

\begin{figure}[h]
	\centering
	\subfigure[$\mathcal{E}_h(t)$ does not include the penalty term.]{
		\includegraphics[width=2.8in]{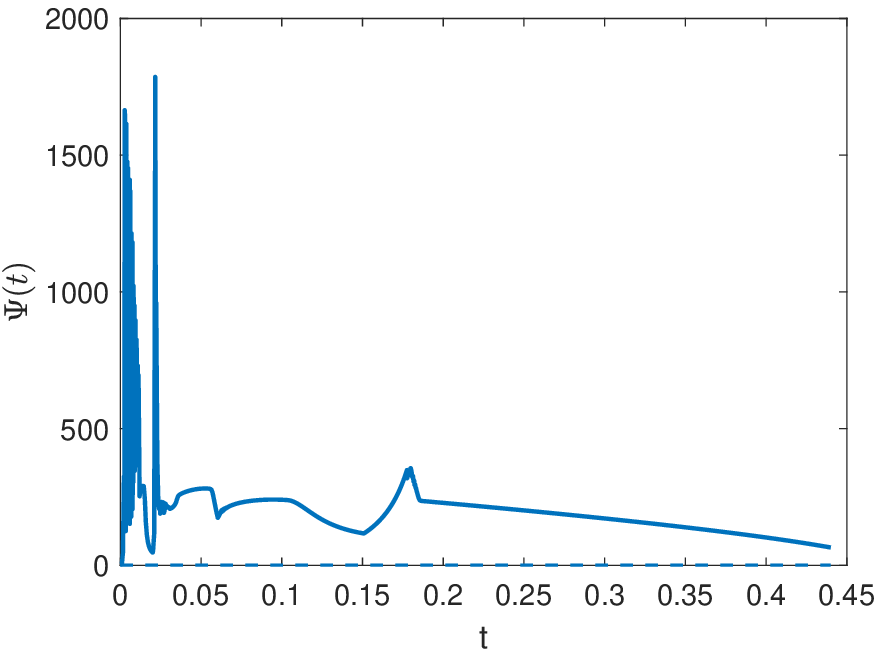}
	}
	\subfigure[Adding the penalty term to $\mathcal{E}_h(t)$.]{
		\includegraphics[width=2.8in]{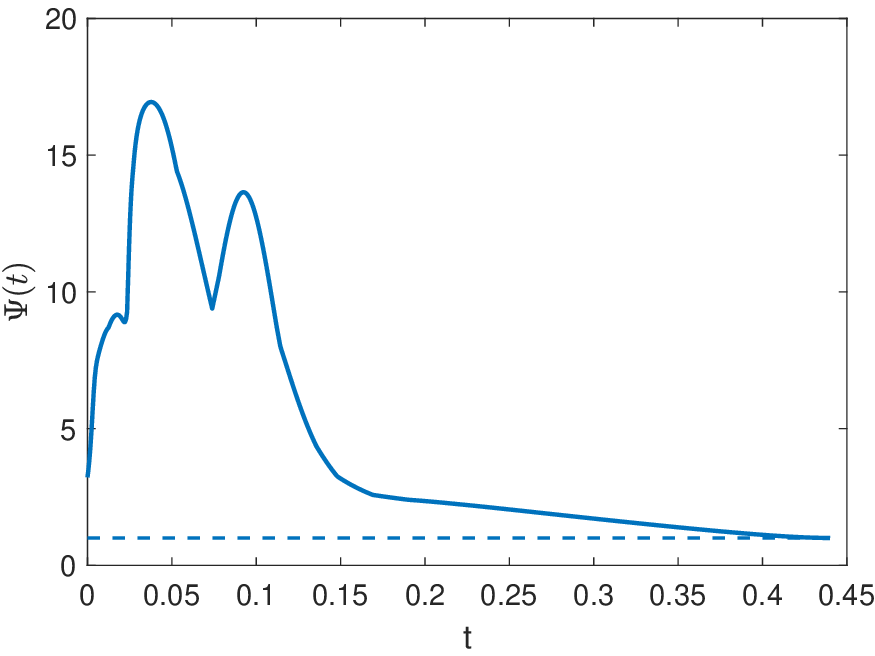}
	}
	\caption{The change of the MRI function $\Psi(t)$ over time for the flower-shaped curve.}
	\label{fig:penalty_flower}
\end{figure}
%For the volume preserving mean curvature flow and wetting problems, we also tested the evolution of the MRI function $\Psi(t)$ over time. When observing the evolution of a flower-shaped curve under volume preserving mean curvature flow, the addition of the penalty term can improve grid quality effectively. In wetting problems, when the discrete energy function $\tilde{\mathcal{E}}_h(t)$ without the penalty term, the mesh distribution is already relatively uniform, and adding the penalty term can slightly improve mesh quality. 

\subsection{Volume preserving mean curvature flow}
In this subsection, we show numerical simulations for the volume preserving mean curvature flow.
We choose the same initial curve as that in Section \ref{flower1}. 
We discretize the curve \eqref{flower equation} uniformly in $\alpha$ by setting $\x_i=\x(\alpha_i)$ with $\alpha_i=\frac{2\pi}{n}i$. The evolution of the curve is plotted in Fig. \ref{fig:flower2}. We can see that the flower-shaped curve slowly becomes convex  and eventually evolves into a circular shape.  
Due to the constraint that the enclosed area remains constant, the shape of the curve will not change after it becomes a circle, i.e., it will reach a stationary state. The discrete energy function $\mathcal{E}_h(t)$ changes with time as shown in Fig. \ref{fig: flower energy2}. We can see that the discrete energy function decreases in early stage and then approaches to a constant when the system goes to the stationary state.

\begin{figure}[h]
	\centering
	\subfigure[t=0]{
		\includegraphics[width=1.3in]{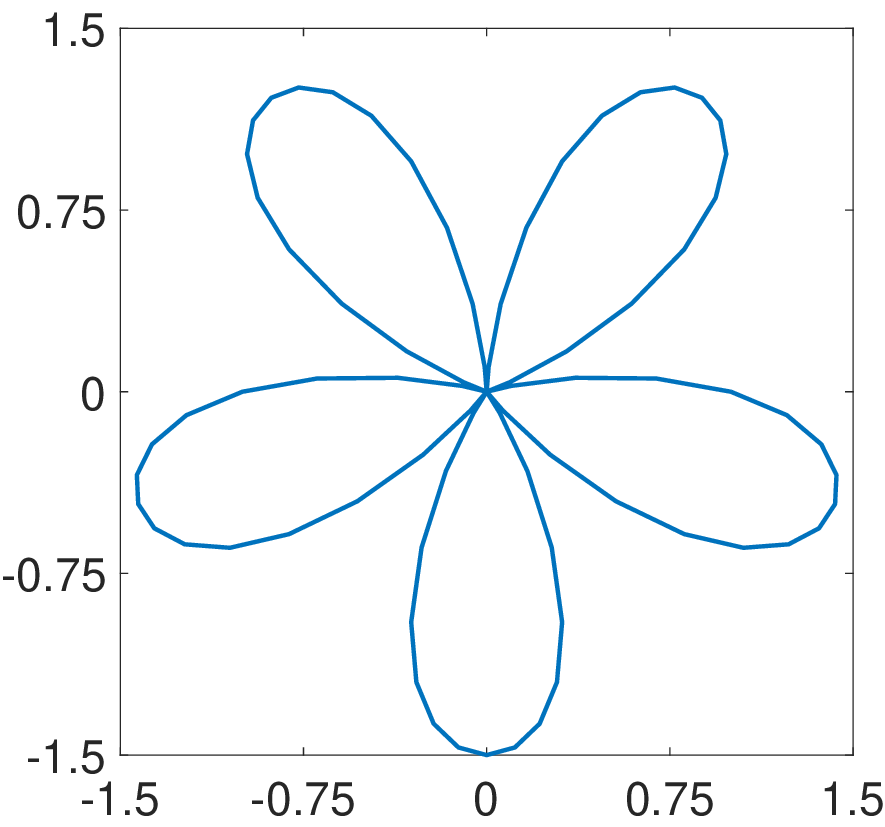}
	}
	\subfigure[t=0.03]{
		\includegraphics[width=1.3in]{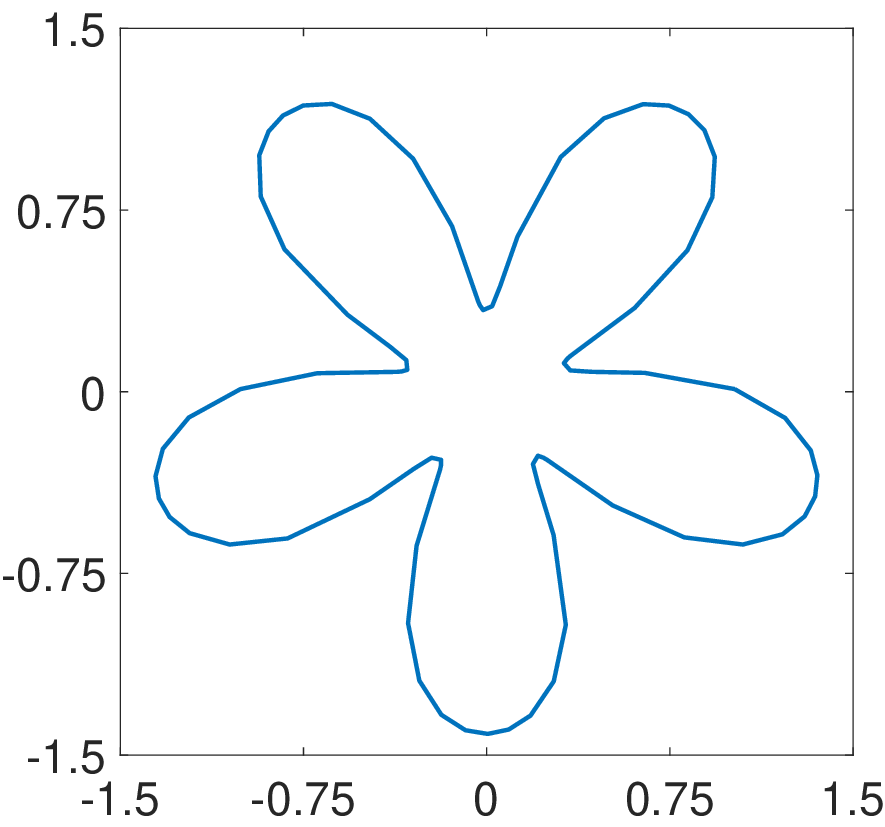}
	}
	\subfigure[t=0.06]{
		\includegraphics[width=1.3in]{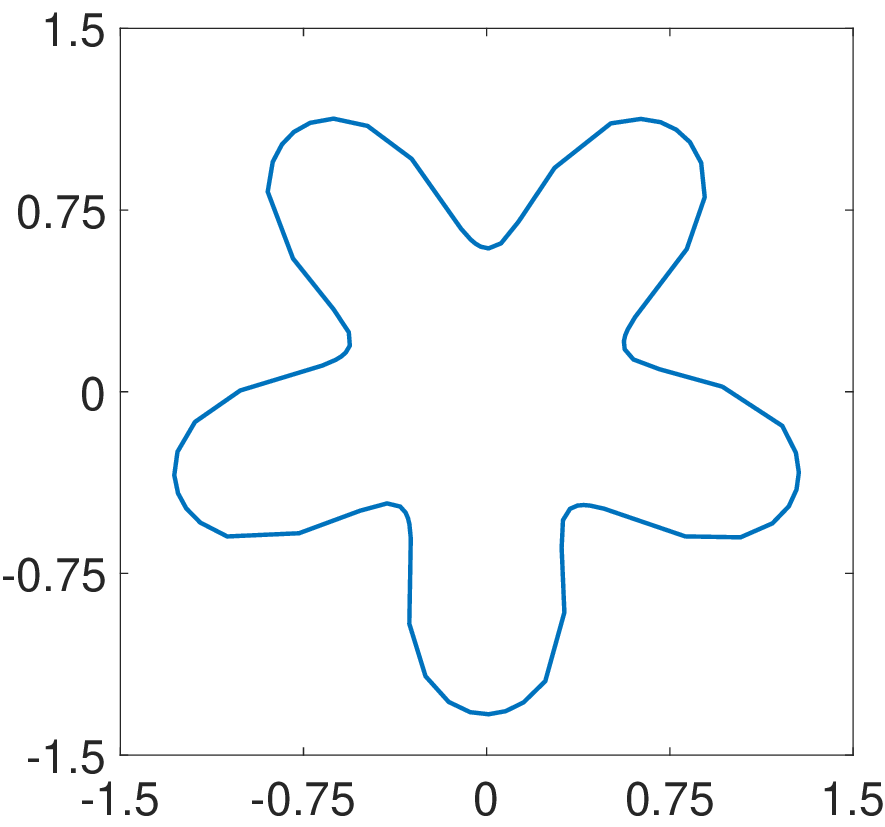}
	}
	
	\subfigure[t=0.09]{
		\includegraphics[width=1.3in]{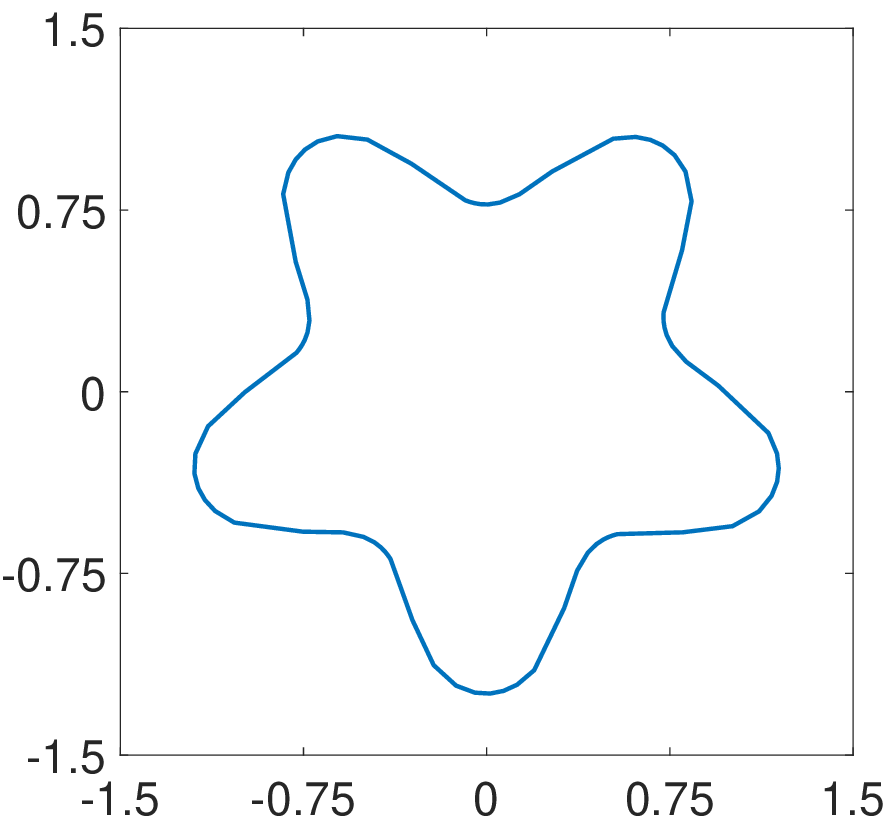}
	}
	\subfigure[t=0.12]{
		\includegraphics[width=1.3in]{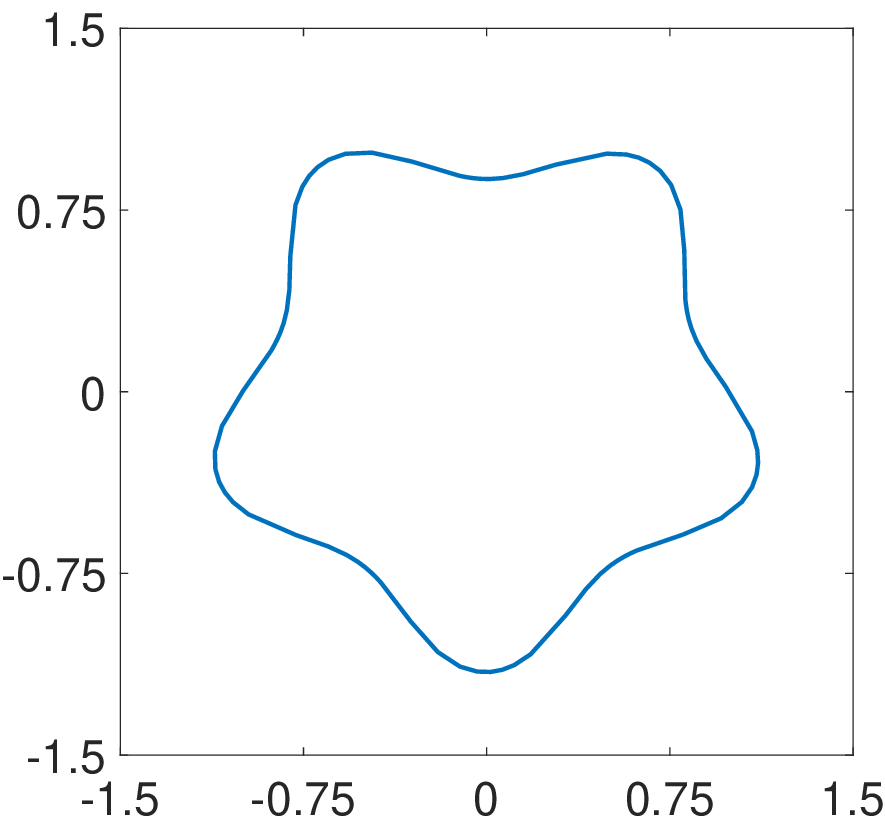}
	}
	\subfigure[t=0.15]{
		\includegraphics[width=1.3in]{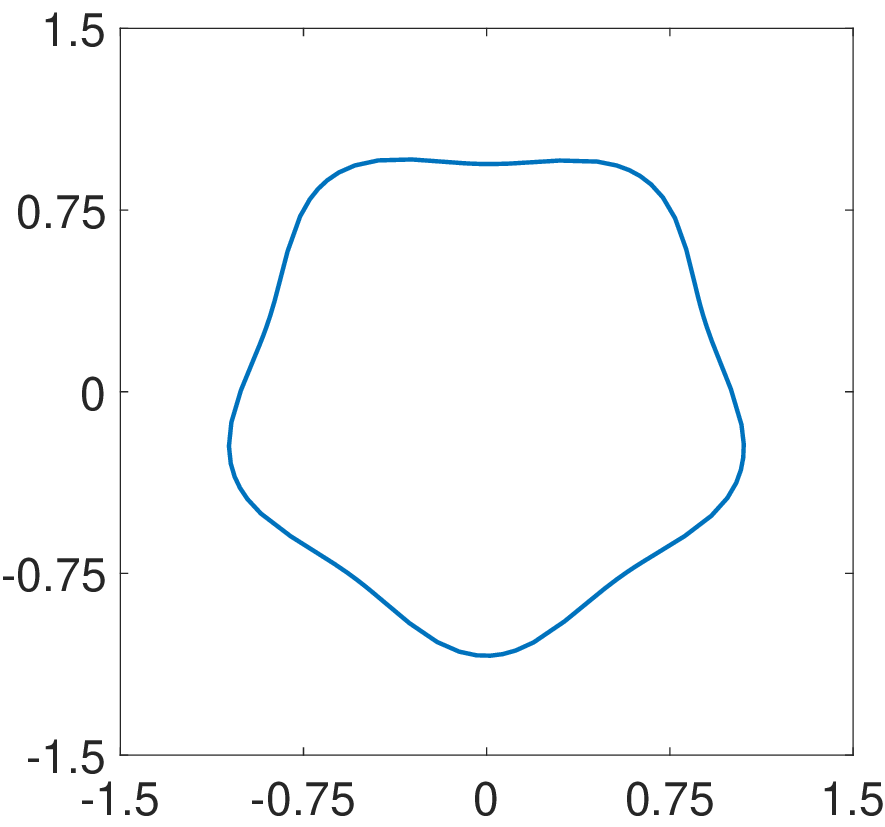}
	}
	
	\subfigure[t=0.18]{
		\includegraphics[width=1.3in]{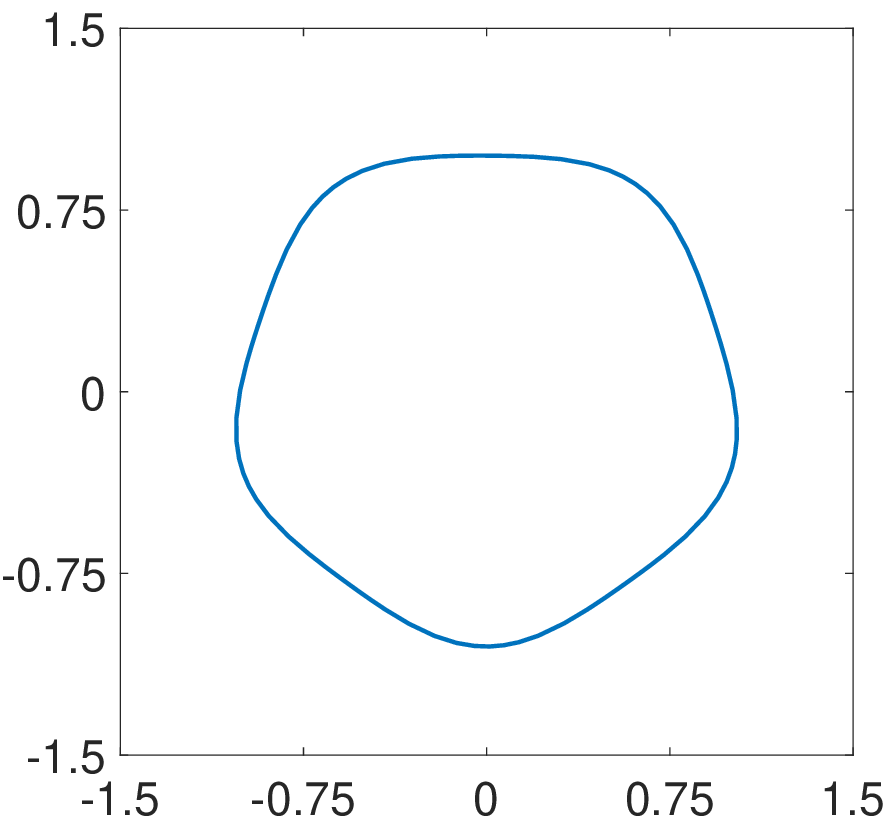}
	}
	\subfigure[t=0.3]{
		\includegraphics[width=1.3in]{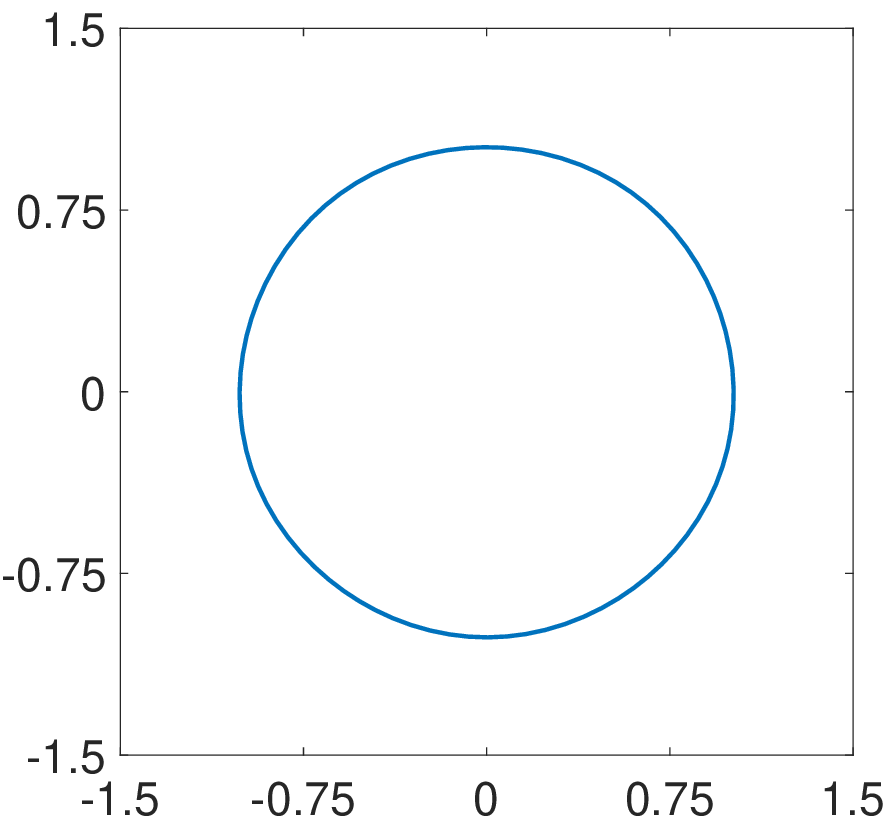}
	}
	\subfigure[t=0.4]{
		\includegraphics[width=1.3in]{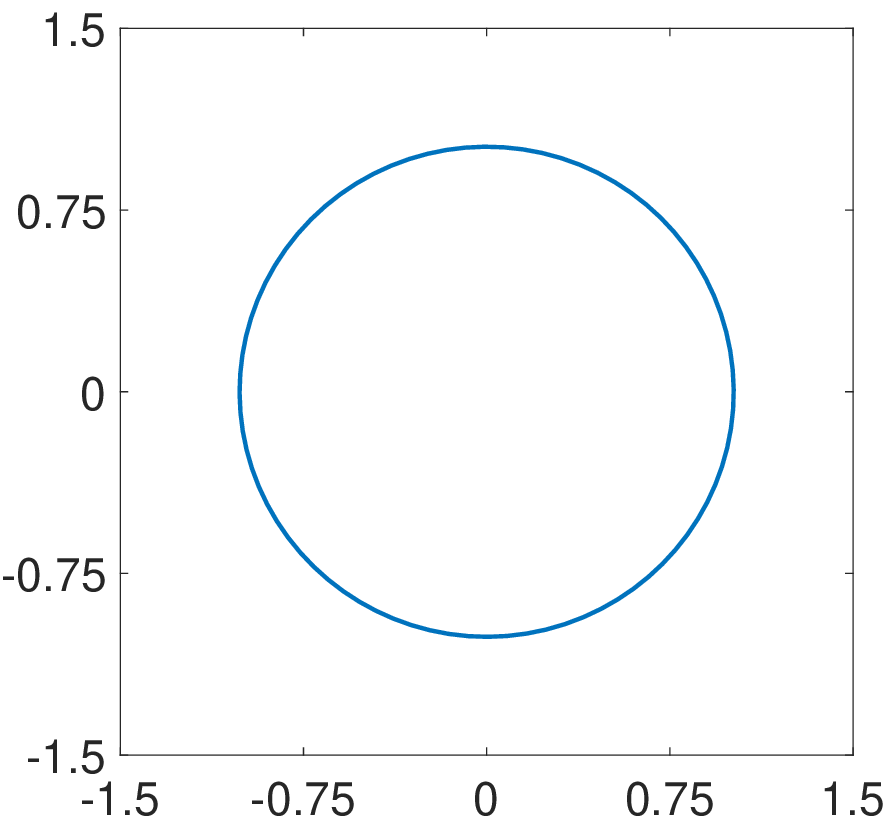}
	}
	\caption{Evolution of a flower-shaped curve under the volume preserving mean curvature flow (n=80).}
	\label{fig:flower2}
\end{figure}
\begin{figure}[h]
	\centering
	\includegraphics[width=0.5\textwidth]{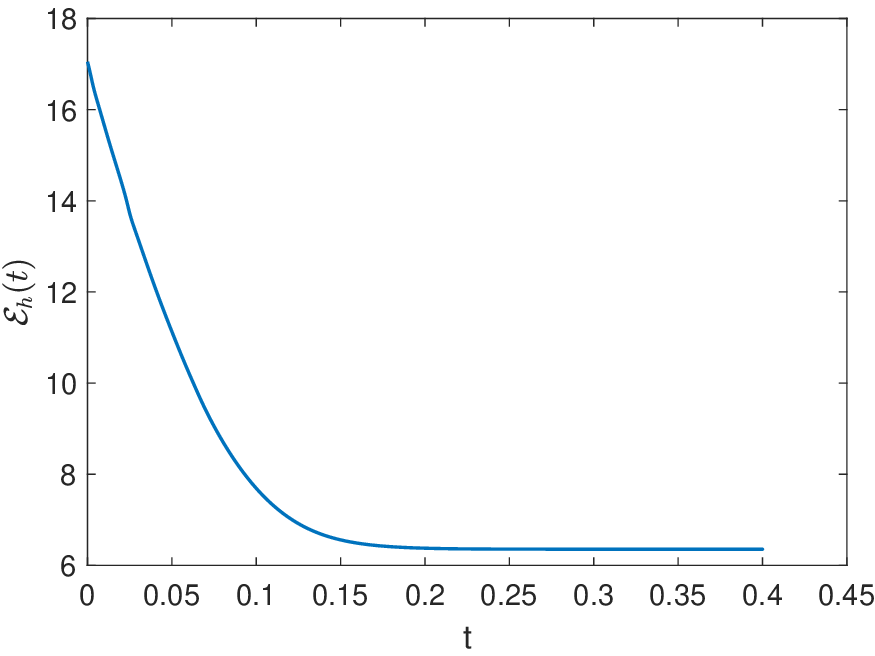}
	\caption{The change of the discrete energy $\mathcal{E}_h(t)$ with respect to time.}
	\label{fig: flower energy2}
\end{figure}

\subsection{Wetting problems}
We consider the $x$-axis as the horizontal line with the positive direction to the right. The initial curve is taken as a unit semicircle
\begin{equation} \label{semicircle}
	\tilde{\Gamma}(0): \x(\alpha)=(\cos\alpha,\sin\alpha)^\top, \quad 0\leq\alpha\leq\pi.
\end{equation}
Under the constraint that the area enclosed by $\tilde{\Gamma}(t)$ and the $x$-axis remains constant, when $\tilde{\Gamma}(t)$ reaches a stationary state under mean curvature flow, the energy no longer changes. From the expression \eqref{3E-L}, the left and right contact angles of the curve at stationary state are equal to the Young's angle.

We discretize the curve uniformly in $\alpha$ by setting $\x_i=\x(\alpha_i)$ with $\alpha_i=\frac{\pi}{n}i$ and take the Young's angle $\theta_Y$ as $\frac{2\pi}{3}$ and $\frac{\pi}{3}$ respectively. The evolution of the curve is plotted in Fig. \ref{fig:circle3} while the change of the discrete energy function $\tilde{\mathcal{E}}_h$ (with $C=0$ in Eq. \eqref{discrete_energy3}) over time is shown in Fig. \ref{fig:circle energy3}.  We can see that  the droplet gradually change its shape from a semi-circle to a circular shape with stationary contact angles.
From Fig. \ref{fig:circle energy3} we can see that the discrete energy function initially decreases to a constant value.

\begin{figure}[h]
	\centering
	\subfigure[$\theta_Y=\frac{2\pi}{3}$]{
		\includegraphics[width=3.5in]{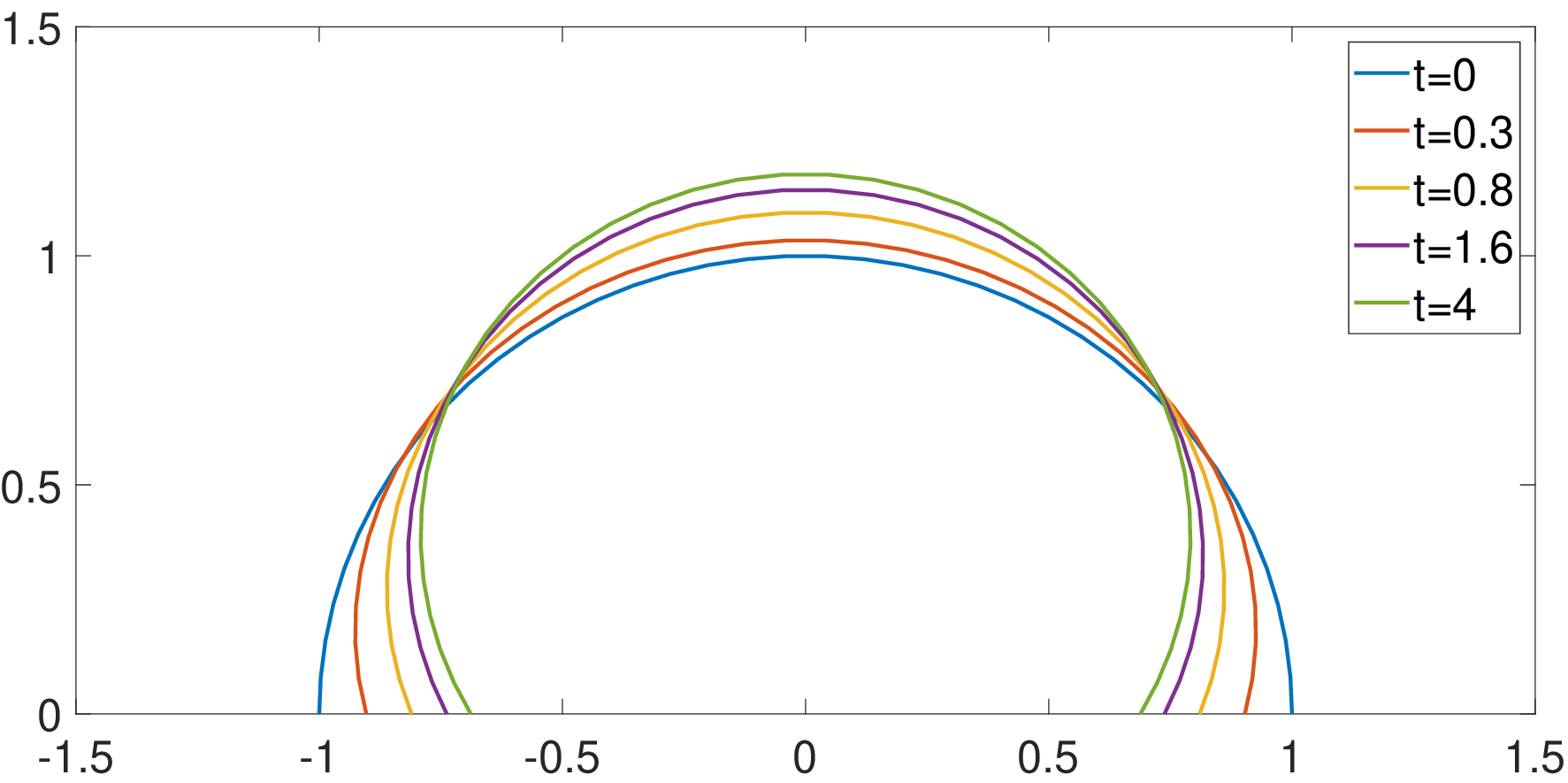}
	}
	
	\subfigure[$\theta_Y=\frac{\pi}{3}$]{
		\includegraphics[width=3.5in]{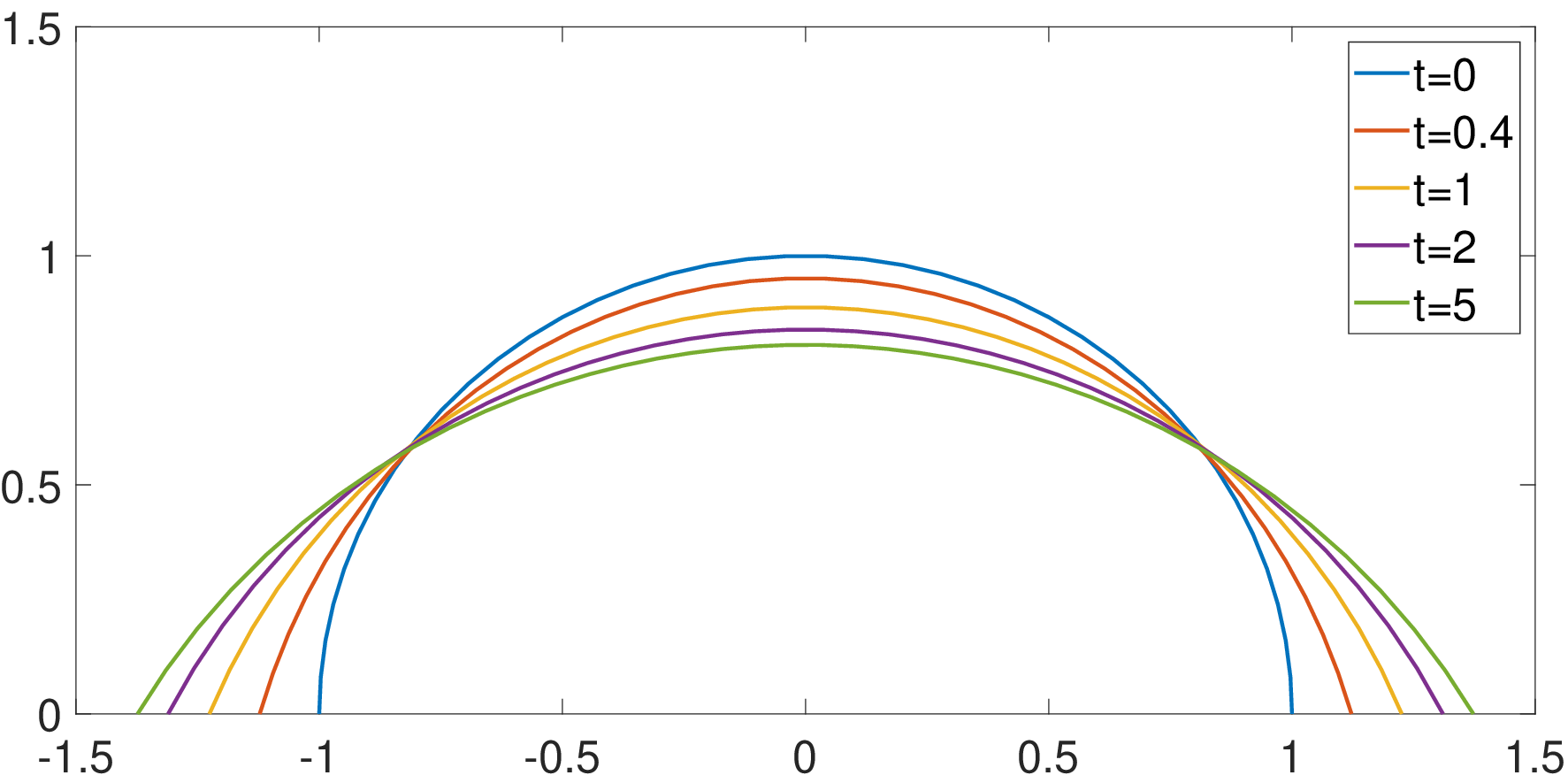}
	}
	\caption{Evolution of unit semicircles in the wetting problem (n=40).}
	\label{fig:circle3}
\end{figure}

\begin{figure}[h]
	\centering
	\subfigure[$\theta_Y=\frac{2\pi}{3}$]{
		\includegraphics[width=2.8in]{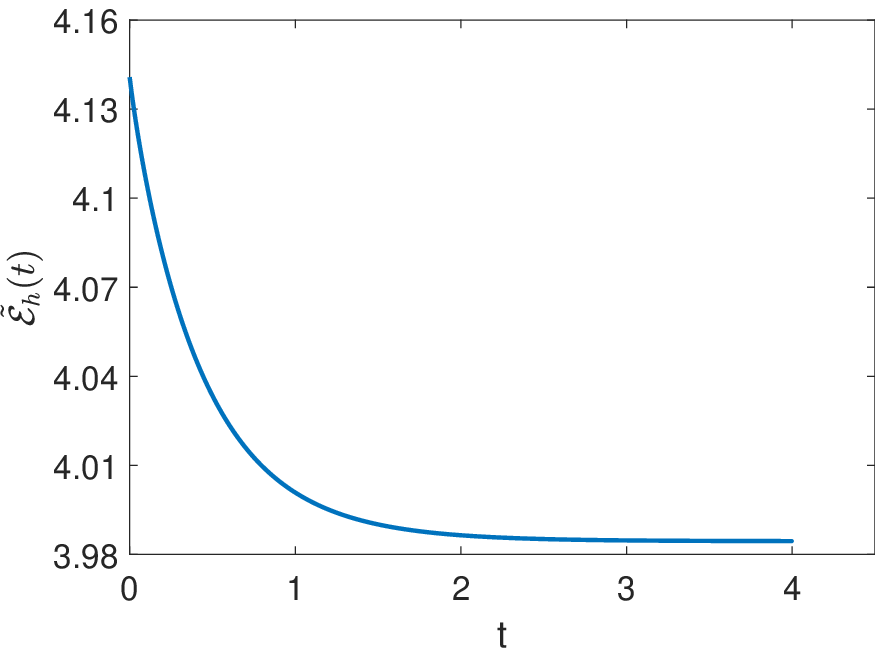}
	}
	\subfigure[$\theta_Y=\frac{\pi}{3}$]{
		\includegraphics[width=2.8in]{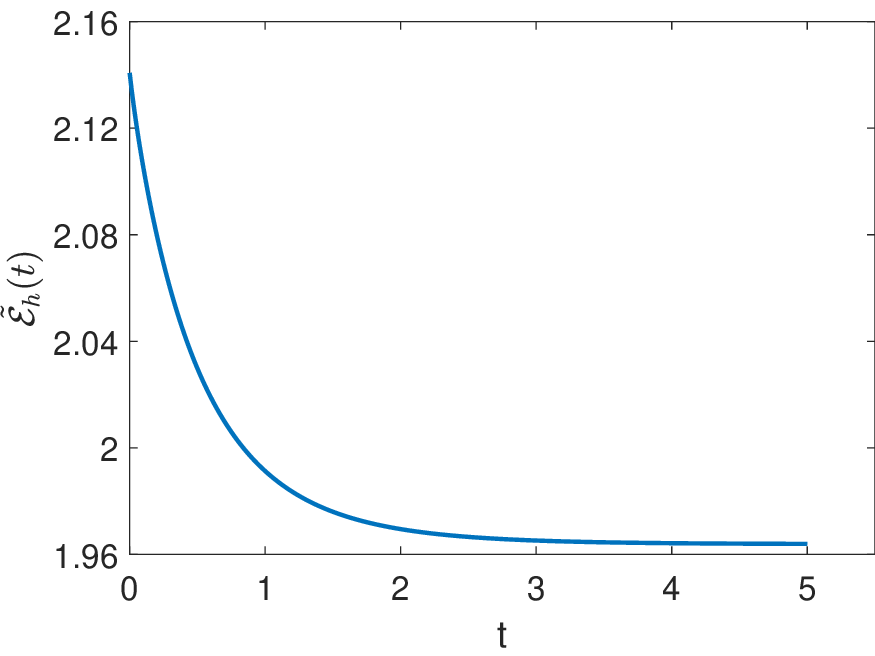}
	}
	\caption{The change of the discrete energy $\tilde{\mathcal{E}}_h(t)$ with respect to time.}
	\label{fig:circle energy3}
\end{figure}

%For the initial unit semicircle $\tilde{\Gamma}(0)$, subject to the constraint that the area enclosed by it and the bottom line remains constant, it is easy to find the theoretical solution after reaching the steady state under mean curvature flow. If we take $\theta_Y=\frac{2\pi}{3}$, the radius of the theoretical solution at steady state is given by $R=\sqrt{\frac{\pi}{\frac{4\pi}{3}+\frac{\sqrt{3}}{2}}}$. On the other hand, if $\theta_Y=\frac{\pi}{3}$, the radius of the theoretical solution at steady state is given by $R=\sqrt{\frac{\pi}{\frac{2\pi}{3}-\frac{\sqrt{3}}{2}}}$. The shape of the steady state curve can be easily determined by the left and right contact angles and the curve's radius.

%In the experimental simulation, even a minor error has the potential to induce lateral displacement of the discrete curve, leading to a lack of alignment between the center of the discrete curve and the exact curve. This deviation results in the formula for calculating the error \eqref{err1} being insufficiently precise. 
The exact solutions of wetting problems can be affected by lateral displacement of the curve. We thus define the error as the difference of the stationary-state energy between the discrete curve and the smooth curve, i.e.
\begin{equation} \label{err_energy}
	err_{2}:=|\tilde{\mathcal{E}}-\tilde{\mathcal{E}}_h|.
\end{equation}
The convergence order is calculated as in Eq. \eqref{order}. We conducted numerical simulations with different parameters to evaluate the accuracy and convergence of the method, setting $\xi_0=\xi_1=1$. For the case where Young's angle is $\theta_Y=\frac{2\pi}{3}$, we use a time step size of $\Delta t=10^{-5}$ and varied the number of nodes as $n=5,10,20,40,80$. The simulation was performed until $T=4$. The resulting errors and convergence order are summarized in Table \ref{err order 120}. Similarly, for the case where Young's angle is $\theta_Y=\frac{\pi}{3}$, we used the same time step size and varied the number of nodes as before. The simulation was performed until $T=5$, and the corresponding errors and convergence order are presented in Table \ref{err order 60}. We could see the method has second order convergent with respect to the spacial mesh size. 
\begin{table}[htbp]   
	\begin{center}   
		\caption{ {The errors on the energy and the convergence order for  $\theta_Y=\frac{2\pi}{3}$}. }  
		\label{err order 120} 
		\begin{tabular}{c|c c c c c}   
			\hline \textbf{\em n} & 5 & 10 & 20 & 40 & 80 \\   
			\hline \textbf{\em $err_{2}$} & 5.5339e-02 & 1.0682e-02 & 2.3179e-03 & 5.0081e-04 & 9.2443e-05  \\ 
			\textbf{\em $Order$} & - & 2.37 & 2.20 & 2.21 & 2.44 \\      
			\hline    
		\end{tabular}   
	\end{center}   
\end{table}

\begin{table}[htbp]   
	\begin{center}   
		\caption{ {The errors on the energy and the convergence order for  $\theta_Y=\frac{\pi}{3}$}.}  
		\label{err order 60} 
		\begin{tabular}{c|c c c c c}   
			\hline \textbf{\em n} & 5 & 10 & 20 & 40 & 80 \\   
			\hline \textbf{\em $err_2$} & 6.2309e-02 & 1.2166e-02 & 2.4918e-03 & 4.8600e-04 & 5.6509e-05  \\ 
			\textbf{\em $Order$} & - & 2.36 & 2.29 & 2.36 & 3.10  \\      
			\hline    
		\end{tabular}   
	\end{center}   
\end{table}

Notice that the energy is a very weak measure on the accuracy of the numerical solution. We could also consider the convergence of the error under some stronger measures, like the manifold distance \cite{zhao2021energy}, i.e.
\begin{equation} \label{err_area}
	err_3 := |(\Omega_h \backslash \Omega)\cup(\Omega \backslash \Omega_h)| = |\Omega_h| + |\Omega| - 2|(\Omega_h \cap \Omega)|. 
\end{equation}
Here $\Omega_h$ is the region enclosed by the discrete curve $\tilde{\Gamma}_h$ and the horizontal line, and $\Omega$ is the region enclosed by the smooth curve $\tilde{\Gamma}$ and the horizontal line. However, we find that the convergence rate is not optimal for the error. The main reason is that we are comparing with the analytical solution in stationary state, which is obtained by the numerical solution at final time $T$ even the energy seems unchanged anymore. To accelerate the process of the droplet going to stationary state, we decrease the friction coefficients  $\xi_0$ and $\xi_1$. For the case $\theta_Y = \frac{2\pi}{3}$, we set $\xi_0 = \xi_1 = 0.1$, with a time step size of $\Delta t=5\times10^{-6}$. For the case $\theta_Y = \frac{\pi}{3}$, we set $\xi_0 = \xi_1 = 0.02$, with a time step size of $\Delta t=10^{-6}$. The end time is the same as the previous tests. The numerical errors and convergence order are given in Tables \ref{err_area order 120} and \ref{err_area order 60}.  We can see that our method  gives optimal convergence order with respect to spatial mesh size for the errors measured in manifold distance.

\begin{table}[htbp]   
	\begin{center}   
		\caption{The errors in manifold distance and the convergence order for  $\theta_Y=\frac{2\pi}{3}$.}  
		\label{err_area order 120} 
		\begin{tabular}{c|c c c c c}   
			\hline \textbf{\em n} & 5 & 10 & 20 & 40 & 80 \\   
			\hline \textbf{\em $err_3$} & 1.7023e-01 & 3.4356e-02 & 7.6659e-03 & 1.7409e-03 & 4.4573e-04  \\ 
			\textbf{\em $Order$} & - & 2.31 & 2.16 & 2.14 & 1.97 \\      
			\hline    
		\end{tabular}   
	\end{center}   
\end{table}

\begin{table}[htbp]   
	\begin{center}   
		\caption{The errors in manifold distance and the convergence order for  $\theta_Y=\frac{\pi}{3}$.}  
		\label{err_area order 60} 
		\begin{tabular}{c|c c c c c}   
			\hline \textbf{\em n} & 5 & 10 & 20 & 40 & 80 \\   
			\hline \textbf{\em $err_3$} & 1.5658e-01 & 3.1706e-02 & 7.1495e-03 & 1.6998e-03 & 4.1545e-04  \\ 
			\textbf{\em $Order$} & - & 2.30 & 2.15 & 2.07 & 2.03  \\      
			\hline    
		\end{tabular}   
	\end{center}   
\end{table}

\section{Conclusions}
In this paper, we develop a novel finite element method for mean curvature flows by using the Onsager principle as an approximation tool. The key feature of the method is that the semi-discrete scheme preserves the energy dissipation relations of the Onsager principle. This is important for many applications. We apply the method to three different problems, namely the simple mean curvature flow, the volume preserving mean curvature flow and a wetting problem on substrate. Numerical examples show that
the method works well for all the three problems. 

There are several problems we need to study in the future. Firstly, we apply an explicit Euler scheme for the time discretization. The scheme is stable only the time step is small enough. We need consider implicit or semi-implicit scheme to improve the stability of the scheme. Another important issue is on the quality of meshes of the discrete geometric flow. To avoid mesh degeneracy and entanglement, we add a penalty term to the energy. The technique works well only when we need choose the penalty parameter and the time step properly. It is known that some other methods have properties to keep the mesh equally distributed in arc length. It is interesting to combine these techniques with our method. In addition, it is interesting to do numerical analysis of the method and also extend the method to higher order approximations and other geometric flows.

%%%% Acknowledgments %%%%%%%%
\section*{Acknowledgments}
The work was partially supported by NSFC 11971469 and 12371415. We would also like to acknowledge the support of the Beijing Natural Science Foundation (Grant No. Z240001). Additionally, we thank Professor Wei Jiang for his helpful discussions.

\appendix
\section*{Appendix}
\setcounter{equation}{0}%将公式编号归0
\renewcommand{\theequation}{A.\arabic{equation}}

In sections \ref{problem 1}-\ref{problem 3}, we derived multiple systems of equations. For convenience of the reader, we will show the specific form of Eqs. \eqref{AV=G}, \eqref{AV=G penalty term}, \eqref{2AV=G} and \eqref{3AV=G} below.

Firstly, for Eq. \eqref{AV=G}, the coefficient matrix $A$ and the right-hand term $G$ are computed as follows: 
$$
\begin{gathered}
	A=
	\begin{pmatrix}
		A_{11}&A_{12}& & &A_{1n}\\
		A_{21}&A_{22}&A_{23}\\
		\quad&\ddots&\ddots&\ddots\\
		& &A_{n-1,n-2}&A_{n-1,n-1}&A_{n-1,n}\\
		A_{n,1}& & &A_{n,n-1}&A_{n,n}
	\end{pmatrix},
\end{gathered}
$$
where the elements $A_{ij}\ (1\leq i,j\leq n)$ are square matrixes of order 2. Denote by $I_2$ the unit matrix of order 2. Then we have
\begin{equation} \label{a_ii}
	\begin{aligned}
		& A_{ii}=\frac{|\x_{i}(t)-\x_{i-1}(t)| + |\x_{i+1}(t)-\x_{i}(t)|}{3} I_2, &1\leq i\leq n-1, \\
		& A_{i,i+1}=A_{i+1,i}=\frac{| \x_{i+1}(t)-\x_{i}(t) |}{6} I_2, &1\leq i\leq n-1,  
	\end{aligned}
\end{equation}
and
\begin{equation} \label{a_nn}
	\begin{aligned}
		& A_{nn}=\frac{| \x_{n}(t)-\x_{n-1}(t) |+| \x_{1}(t)-\x_{n}(t) |}{3} I_2,\\
		& A_{n,1}=A_{1,n}=\frac{| \x_{1}(t)-\x_{n}(t) |}{6} I_2.
	\end{aligned}
\end{equation}
The right-hand term $G=(\vec{g}_1^\top,\vec{g}_2^\top,...,\vec{g}_n^\top)^\top$ with 
\begin{equation} \label{g_i}
	\vec{g}_i=-\frac{\partial \mathcal{E}_h}{\partial \x_i}=\frac{\x_{i+1}(t)-\x_{i}(t)}{| \x_{i+1}(t)-\x_{i}(t) |}-\frac{\x_{i}(t)-\x_{i-1}(t)}{| \x_{i}(t)-\x_{i-1}(t) |}, \quad 1\leq i\leq n-1, 
\end{equation}
and
\begin{equation} \label{g_n}
	\vec{g}_n=-\frac{\partial \mathcal{E}_h}{\partial \x_n}=\frac{\x_{1}(t)-\x_{n}(t)}{| \x_{1}(t)-\x_{n}(t) |}-\frac{\x_{n}(t)-\x_{n-1}(t)}{| \x_{n}(t)-\x_{n-1}(t) |}.
\end{equation}
By applying mass lumping to Eq. \eqref{AV=G}, we obtain the classical semi-discrete Dziuk scheme.

Secondly, for Eq. \eqref{AV=G penalty term}, the right-hand term $G^{\delta}=((\vec{g}_1^{\delta})^\top,(\vec{g}_2^{\delta})^\top,...,(\vec{g}_n^{\delta})^\top)^\top$ is computed as follows: 
\begin{equation*}
	\begin{aligned}
		g_i^{\delta}=-\frac{\partial \mathcal{E}_h^{\delta}}{\partial \x_i}
		&=\frac{\x_{i+1}-\x_{i}}{| \x_{i+1}-\x_{i} |}-\frac{\x_{i}-\x_{i-1}}{| \x_{i}-\x_{i-1} |}\\
		&\ +2\delta(\frac{| \x_{i+1}-\x_{i} |}{| \x_{i+2}-\x_{i+1} |}-1)\frac{\x_{i+1}-\x_{i}}{| \x_{i+2}-\x_{i+1} | \cdot | \x_{i+1}-\x_{i} |}\\
		&\ -2\delta(\frac{| \x_{i}-\x_{i-1} |}{| \x_{i+1}-\x_{i} |}-1)\frac{\frac{\x_{i}-\x_{i-1}}{| \x_{i}-\x_{i-1} |}| \x_{i+1}-\x_{i} | + | \x_{i}-\x_{i-1} | \frac{\x_{i+1}-\x_{i}}{| \x_{i+1}-\x_{i} |}}{| \x_{i+1}-\x_{i} |^2}\\
		&\ +2\delta(\frac{| \x_{i-1}-\x_{i-2} |}{| \x_{i}-\x_{i-1} |}-1)\frac{| \x_{i-1}-\x_{i-2} |(\x_{i}-\x_{i-1})}{| \x_{i}-\x_{i-1} |^3}.
	\end{aligned}
\end{equation*}

Next, for Eq. \eqref{2AV=G}, the coefficient matrix $\hat{A}$ and the right-hand term $\hat{G}$ are computed as follows: 
$$
\begin{gathered}
	% \footnotesize        
	\hat{A}=
	\begin{pmatrix}
		A_{11}&A_{12}& & & & & &A_{1n}&\vec{b}_{1,n+1}\\
		A_{21}&A_{22}&A_{23}\\
		\quad&\ddots&\ddots&\ddots& & & & &\vdots\\
		& &A_{i,i-1}&A_{i,i}&A_{i,i+1}\\
		& & &A_{i+1,i}&A_{i+1,i+1}&A_{i+1,i+2}\\
		& & & &\ddots&\ddots&\ddots& &\vdots\\
		& & & & &A_{n-1,n-2}&A_{n-1,n-1}&A_{n-1,n}&\vec{b}_{n-1,n+1}\\
		A_{n,1}& & & & & &A_{n,n-1}&A_{n,n}&\vec{b}_{n,n+1}\\
		\vec{b}_{1,n+1}^\top& &\ldots& & &\ldots&\vec{b}_{n-1,n+1}^\top&\vec{b}_{n,n+1}^\top&0
	\end{pmatrix},
\end{gathered}
$$
and $\hat{G}=(\vec{g}_{1}^\top,\vec{g}_{2}^\top,...,\vec{g}_{n}^\top,g_{n+1})^\top$. Where the formulations of $A_{ij}\ (1\leq i,j \leq n)$ and $\vec{g}_i\ (1\leq i \leq n)$ are given in Eqs. \eqref{a_ii}, \eqref{a_nn}, and \eqref{g_i}, \eqref{g_n}, respectively. In addition, we have $g_{n+1}=0$. The elements in last column of $\hat{A}$ are given by
\begin{equation*}
	\begin{aligned}	
		& \vec{b}_{i,n+1} = \frac{P}{2}[\x_{i+1}(t)-\x_{i-1}(t)], \quad for\quad 2\leq i\leq n-1, \\
		& \vec{b}_{1,n+1} = \frac{P}{2}[\x_2(t)-\x_n(t)],\quad \vec{b}_{n,n+1} = \frac{P}{2}[\x_1(t)-\x_{n-1}(t)]. 
	\end{aligned}
\end{equation*}

Finally, for Eq. \eqref{3AV=G}, the coefficient matrix $\tilde{A}$ and the right-hand term $\tilde{G}$ are computed as follows: 
$$
\begin{gathered}
	% \footnotesize
	\tilde{A}=
	\begin{pmatrix}
		\tilde{A}_{11}&\tilde{A}_{12}& & & & & & &\tilde{\vec{b}}_{1,n+1}\\
		\tilde{A}_{21}&\tilde{A}_{22}&\tilde{A}_{23}\\
		\quad&\ddots&\ddots&\ddots& & & & &\vdots\\
		& &\tilde{A}_{i,i-1}&\tilde{A}_{i,i}&\tilde{A}_{i,i+1}\\
		& & &\tilde{A}_{i+1,i}&\tilde{A}_{i+1,i+1}&\tilde{A}_{i+1,i+2}\\
		& & & &\ddots&\ddots&\ddots& &\vdots\\
		& & & & &\tilde{A}_{n-1,n-2}&\tilde{A}_{n-1,n-1}&\tilde{A}_{n-1,n}&\tilde{\vec{b}}_{n-1,n+1}\\
		& & & & & &\tilde{A}_{n,n-1}&\tilde{A}_{n,n}&\tilde{\vec{b}}_{n,n+1}\\
		\tilde{\vec{b}}^\top_{1,n+1}& &\ldots& & &\ldots&\tilde{\vec{b}}^\top_{n-1,n+1}&\tilde{\vec{b}}^\top_{n,n+1}&0
	\end{pmatrix},
\end{gathered}
$$
where $\tilde{A}_{ij}\ (1\leq i,j\leq n)$ are square matrices of order 2, and $\tilde{\vec{b}}_{i,n+1}\ (1\leq i\leq n)$ are two dimensional column vectors. The subblock on the diagonal of $\tilde{A}$ are computed as
\begin{equation*}
	\begin{aligned}
		&\tilde{A}_{ii}=\xi_0\frac{| \x_{i}(t)-\x_{i-1}(t) |+| \x_{i+1}(t)-\x_{i}(t) |}{3}I_2, \quad  2\leq i\leq n-1, \\ 
		&\tilde{A}_{11}=\left( \begin{array}{cc} \xi_0\frac{|\x_2(t)-\x_1(t)|}{3}+\xi_1&0\\0&1 \end{array} \right),\quad
		\tilde{A}_{nn}=\left( \begin{array}{cc} \xi_0\frac{|\x_n(t)-\x_{n-1}(t)|}{3}+\xi_1&0\\0&1 \end{array} \right).
	\end{aligned}
\end{equation*}
The subblocks on the superdiagonal and subdiagonal of $\tilde{A}$ are computed as 
\begin{equation*}
	\begin{aligned}
		&\tilde{A}_{i,i+1}=\tilde{A}_{i+1,i}^\top=\xi_0\frac{| \x_{i+1}(t)-\x_{i}(t) |}{6}I_2, \quad 2\leq i\leq n-2, \\ 
		&\tilde{A}_{21}=\tilde{A}_{12}^\top=\left( \begin{array}{cc} \xi_0\frac{|\x_2(t)-\x_1(t)|}{6}&0\\0&0 \end{array} \right),\quad
		\tilde{A}_{n-1,n}=\tilde{A}_{n,n-1}^\top=\left( \begin{array}{cc} \xi_0\frac{|\x_n(t)-\x_{n-1}(t)|}{6}&0\\0&0 \end{array} \right).
	\end{aligned}
\end{equation*}
The last column of $\tilde{A}$ are calculated as
\begin{equation*}
	\begin{aligned}
		&\tilde{\vec{b}}_{i,n+1} = \frac{P}{2}[\x_{i+1}(t)-\x_{i-1}(t)], \quad 2\leq i\leq n-1, \\
		&\tilde{\vec{b}}_{1,n+1} = (\frac{x_2^{(2)}(t)}{2}, 0)^\top,\quad
		\tilde{\vec{b}}_{n,n+1} = (-\frac{x_{n-1}^{(2)}(t)}{2}, 0)^\top.
	\end{aligned}
\end{equation*}
The right-hand term $\tilde{G}=(\tilde{\vec{g}}_1^\top,\tilde{\vec{g}}_2^\top,...,\tilde{\vec{g}}_n^\top,\tilde{g}_{n+1})^\top$, where $\tilde{\vec{g}}_i\ (1\leq i\leq n)$ denote two dimensional column vectors and $\tilde{g}_{n+1}$ denotes a scalar, respectively. They are calculated as 
\begin{equation*}
	\begin{aligned}
		&\tilde{\vec{g}}_i=-\frac{\partial \tilde{\mathcal{E}}_h}{\partial \x_i}=\gamma[\frac{\x_{i+1}(t)-\x_{i}(t)}{| \x_{i+1}(t)-\x_{i}(t) |}-\frac{\x_{i}(t)-\x_{i-1}(t)}{| \x_{i}(t)-\x_{i-1}(t) |}], &2\leq i\leq n-1, \\
		&\tilde{\vec{g}}_1=(-\frac{\partial \tilde{\mathcal{E}}_h}{\partial x_1^{(1)}},0)^\top=(\gamma[\frac{x_2^{(1)}(t)-x_1^{(1)}(t)}{| \x_{2}(t)-\x_{1}(t) |}+\cos\theta_Y], 0)^\top,\\
		&\tilde{\vec{g}}_n=(-\frac{\partial \tilde{\mathcal{E}}_h}{\partial x_n^{(1)}},0)^\top=(-\gamma[\frac{x_n^{(1)}(t)-x_{n-1}^{(1)}(t)}{| \x_{n}(t)-\x_{n-1}(t) |}+\cos\theta_Y], 0)^\top,\\
		&\tilde{g}_{n+1}=0, &i=n+1.
	\end{aligned}
\end{equation*}

%%%% Bibliography  %%%%%%%%%%     
	\bibliographystyle{abbrv}  
	\bibliography{reference.bib}

\end{document}